\theoremstyle{plain}
\newtheorem{thm}{Theorem}[section]
\newtheorem{theorem}[thm]{Theorem}
\newtheorem{lem}[thm]{Lemma}
\newtheorem{lemma}[thm]{Lemma}
\newtheorem{prop}[thm]{Proposition}
\theoremstyle{definition}
\theoremstyle{remark}
\newtheorem{remark}{Remark}
\newtheorem*{remark*}{Remark}
\numberwithin{equation}{section}
        \newcommand{\field}[1]{{\mathbb{#1}}}
        \newcommand{\NN}{\field{N}}
        \newcommand{\ZZ}{\field{Z}}
        \newcommand{\RR}{\field{R}}
        \newcommand{\CC}{\field{C}}
\begin{document}

\title[Exponential localization for eigensections]{Exponential localization for eigensections of the Bochner-Schr\"odinger operator}

\author[Y. A. Kordyukov]{Yuri A. Kordyukov}
\address{Institute of Mathematics, Ufa Federal Research Centre, Russian Academy of Sciences, 112~Chernyshevsky str., 450008 Ufa, Russia} \email{yurikor@matem.anrb.ru}


%
%
\begin{abstract}
We study asymptotic spectral properties of the Bochner-Schr\"odinger operator $H_{p}=\frac 1p\Delta^{L^p\otimes E}+V$ on high tensor powers of a Hermitian line bundle $L$ twisted by a Hermitian vector bundle $E$ on a Riemannian manifold $X$ of bounded geometry under assumption that the curvature form of $L$ is non-degenerate. At an arbitrary point $x_0$ of $X$ the operator $H_p$ can be approximated by a model operator $\mathcal H^{(x_0)}$, which is a Schr\"odinger operator with constant magnetic field. For large $p$, the spectrum of $H_p$  asymptotically coincides, up to order $p^{-1/4}$, with the union of the spectra of the model operators $\mathcal H^{(x_0)}$ over $X$. We show that, if the union of the spectra of $\mathcal H^{(x_0)}$ over the complement of a compact subset of $X$ has a gap, then the spectrum of $H_{p}$ in the gap is discrete and the corresponding eigensections decay exponentially away the compact subset.
\end{abstract}

\date{\today}

 \maketitle
\section{Introduction}
\subsection{The setting}
Let $(X,g)$ be a smooth Riemannian manifold of dimension $d$ without boundary, $(L,h^L)$ a Hermitian line bundle on $X$ with a Hermitian connection $\nabla^L$ and $(E,h^E)$ a Hermitian vector bundle of rank $r$ on $X$ with a Hermitian connection $\nabla^E$. We suppose that $(X, g)$ is  a manifold of bounded geometry and $L$ and $E$ have bounded geometry. This means that the curvatures $R^{TX}$, $R^L$ and $R^E$ of the Levi-Civita connection $\nabla^{TX}$, connections $\nabla^L$ and $\nabla^E$, respectively, and their derivatives of any order are uniformly bounded on $X$ in the norm induced by $g$, $h^L$ and $h^E$, and the injectivity radius $r_X$ of $(X, g)$ is positive.

For any $p\in \NN$, let $L^p:=L^{\otimes p}$ be the $p$th tensor power of $L$ and let
\[
\nabla^{L^p\otimes E}: {C}^\infty(X,L^p\otimes E)\to
{C}^\infty(X, T^*X \otimes L^p\otimes E)
\] 
be the Hermitian connection on $L^p\otimes E$ induced by $\nabla^{L}$ and $\nabla^E$. Consider the induced Bochner Laplacian $\Delta^{L^p\otimes E}$ acting on $C^\infty(X,L^p\otimes E)$ by
\begin{equation}\label{e:def-Bochner}
\Delta^{L^p\otimes E}=\big(\nabla^{L^p\otimes E}\big)^{\!*}\,
\nabla^{L^p\otimes E},
\end{equation} 
where $\big(\nabla^{L^p\otimes E}\big)^{\!*}: {C}^\infty(X,T^*X\otimes L^p\otimes E)\to
{C}^\infty(X,L^p\otimes E)$ is the formal adjoint of  $\nabla^{L^p\otimes E}$. Let $V\in C^\infty(X,\operatorname{End}(E))$ be a self-adjoint endomorphism of $E$. We assume that $V$ and its derivatives of any order are uniformly bounded on $X$ in the norm induced by $g$ and $h^E$. 
We study the Bochner-Schr\"odinger operator $H_p$ acting on $C^\infty(X,L^p\otimes E)$ by
\[
H_{p}=\frac 1p\Delta^{L^p\otimes E}+V. 
\] 
The operator $H_p$ is self-adjoint in the Hilbert space $L^2(X,L^p\otimes E)$ with domain  $H^2(X, L^p\otimes E)$, the second Sobolev space, see \cite{Kor91,ko-ma-ma}. We denote by $\sigma(H_p)$ its spectrum in $L^2(X,L^p\otimes E)$.

Consider the real-valued closed 2-form $\mathbf B$ (the magnetic field) given by 
\begin{equation}\label{e:def-omega}
\mathbf B=iR^L. 
\end{equation} 
We assume that $\mathbf B$ is non-degenerate. Thus, $X$ is a symplectic manifold. In particular, its dimension is even, $d=2n$, $n\in \NN$. 

For $x\in X$, let $B_x : T_xX\to T_xX$ be the skew-adjoint operator such that 
\[
\mathbf B_x(u,v)=g(B_xu,v), \quad u,v\in T_xX. 
\]
The operator $|B_x|:=(B_x^*B_x)^{1/2} : T_xX\to T_xX$ is a positive self-adjoint operator. We assume that it is uniformly positive on $X$: 
\begin{equation}\label{e:uniform-positive} 
b_0:=\inf_{x\in X}|B_x|>0.
\end{equation}

\subsection{Main results}
For an arbitrary $x_0\in X$, the model operator at $x_0$ is a second order differential operator $\mathcal H^{(x_0)}_{p}$, acting on $C^\infty(T_{x_0}X, E_{x_0})$, which is obtained from the operator $H_p$ by freezing coefficients at $x_0$. This operator was introduced by Demailly \cite{Demailly85,Demailly91}.

Consider the trivial Hermitian line bundle $L_0$ over $T_{x_0}X$ and the trivial Hermitian vector bundle $E_0$ over $T_{x_0}X$ with the fiber $E_{x_0}$. We introduce the connection 
\begin{equation}\label{e:nablaL0}
\nabla^{(x_0)}_{p}=d-ip\theta^{(x_0)}, 
\end{equation}
acting on $C^\infty(T_{x_0}X, L^p_0\otimes E_0)\cong C^\infty(T_{x_0}X, E_{x_0})$, with the connection one-form $\theta^{(x_0)}\in \Omega^1(T_{x_0}X)$ given by 
\begin{equation}\label{e:Aflat}
\theta^{(x_0)}_v(w)=\frac{1}{2}\mathbf B_{x_0}(v,w),\quad v\in T_{x_0}X, \quad w\in T_v(T_{x_0}X)\cong  T_{x_0}X. 
\end{equation}
The curvature of $\nabla^{(x_0)}_{p}$ is constant: $d\theta^{(x_0)}=\mathbf B_{x_0}$.  Denote by $\Delta^{(x_0)}_{p}$ the associated Bochner Laplacian. The model operator $\mathcal H^{(x_0)}_{p}$ acting on $C^\infty(T_{x_0}X, E_{x_0})$ is defined as 
\begin{equation}\label{e:DeltaL0p}
\mathcal H^{(x_0)}_{p}=\frac 1p\Delta^{(x_0)}_p+V(x_0).
\end{equation}

Since $B_{x_0}$ is skew-adjoint, its eigenvalues have the form $\pm i a_j(x_0), j=1,\ldots,n,$ with $a_j(x_0)>0$. By \eqref{e:uniform-positive}, $a_j(x_0)\geq b_0>0$ for any $x_0\in X$ and $j=1,\ldots,n$.  Denote by $V_\mu(x_0), \mu=1,\ldots,r$, the eigenvalues of $V(x_0)$. It is well-known that the spectrum of $\mathcal H^{(x_0)}_{p}$ is independent of $p$ and consists of eigenvalues of infinite multiplicity: 
\begin{equation}\label{e:def-Sigmax}
\sigma(\mathcal H^{(x_0)}_{p})=\Sigma_{x_0}:=\left\{\Lambda_{\mathbf k,\mu}({x_0})\,:\, \mathbf k\in\ZZ_+^n, \mu=1,\ldots,r\right\}, 
\end{equation}
where, for $\mathbf k=(k_1,\cdots,k_n)\in\ZZ_+^n$, $\mu=1,\ldots,r$ and $x_0\in X$,
\begin{equation}\label{e:def-Lambda}
\Lambda_{\mathbf k,\mu}(x_0)=\sum_{j=1}^n(2k_j+1) a_j(x_0)+V_\mu(x_0).
\end{equation}
In particular, the lowest eigenvalue of $\mathcal H^{(x_0)}_{p}$ is 
\[
\Lambda_0(x_0):=\sum_{j=1}^n a_j(x_0)+\min _\mu V_\mu(x_0). 
\]
Let $\Sigma$ be the union of the spectra of the model operators: 
\begin{equation}\label{e:def-Sigma}
\Sigma=\bigcup_{x\in X}\Sigma_x=\left\{\Lambda_\mathbf {k,\mu}(x)\,:\, \mathbf k\in\ZZ_+^n, \mu=1,\ldots,r, x\in X \right\}.
\end{equation}

\begin{theorem}[\cite{Ko22}]\label{t:spectrum}
For any $K>0$, there exists $c>0$ such that for any $p\in \NN$ the spectrum of $H_{p}$ in the interval  $[0,K]$  is  contained in the $cp^{-1/4}$-neighborhood of $\Sigma$.  
\end{theorem}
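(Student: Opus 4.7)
The plan is to use the Weyl singular-sequence characterization of $\sigma(H_p)$ together with an IMS localization at scale $r_p=p^{-1/4}$ and a pointwise comparison of $H_p$ with the frozen-coefficient model $\mathcal H^{(x)}_p$. Fix $\lambda\in\sigma(H_p)\cap[0,K]$; the spectral theorem produces, for every $\delta>0$, a normalized $u\in\operatorname{Dom}(H_p)$ with $\|(H_p-\lambda)u\|\leq\delta$, while $\langle H_pu,u\rangle\leq K+\delta$ gives the a priori bound $\|\nabla^{L^p\otimes E}u\|\leq C\sqrt{p}$. The goal is to show $\operatorname{dist}(\lambda,\Sigma)\leq c(\delta+p^{-1/4})$ uniformly in $\lambda$ and $p$, and then let $\delta\to 0$.

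By bounded geometry, one can choose a locally finite cover of $X$ by geodesic balls $B(x_j,r_p)$ of multiplicity independent of $p$ and a subordinate quadratic partition of unity $\{\phi_j^2\}$ with $|\nabla^k\phi_j|\leq C_k r_p^{-k}$. The IMS identity, together with the bound $\sum_j\|[H_p,\phi_j]u\|^2\leq Cp^{-1/2}\|u\|^2$ (obtained by combining $\sum_j|\nabla\phi_j|^2\leq Cr_p^{-2}$ with the a priori estimate on $\nabla^{L^p\otimes E}u$), yields
\[
\sum_j\|(H_p-\lambda)(\phi_j u)\|^2\leq 2\delta^2+Cp^{-1/2}.
\]
For each $j$ I trivialize $L^p\otimes E$ over $B(x_j,r_p)$ via geodesic normal coordinates and $\nabla^{L^p\otimes E}$-parallel transport along radial geodesics (the Ma--Marinescu gauge). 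The transferred section $S_j$ then lives on $T_{x_j}X$, supported in $\{|y|\leq r_p\}$; in this gauge $A(y)=\theta^{(x_j)}(y)+O(|y|^2)$, $g^{ij}=\delta^{ij}+O(|y|^2)$, and $V(y)-V(x_j)=O(|y|)$. A term-by-term comparison of the coefficients of $H_p$ with those of $\mathcal H^{(x_j)}_p$, using the a priori control on $\nabla^{L^p\otimes E}u$ and $H^2$-elliptic estimates from $\|H_pu\|\lesssim 1$, gives
\[
\|(H_p-\mathcal H^{(x_j)}_p)S_j\|\leq Cp^{-1/4}\bigl(\|S_j\|+\|(H_p-\lambda)(\phi_ju)\|\bigr).
\]
Since $\sigma(\mathcal H^{(x_j)}_p)=\Sigma_{x_j}\subset\Sigma$, self-adjointness gives $\operatorname{dist}(\lambda,\Sigma_{x_j})\|S_j\|\leq\|(\mathcal H^{(x_j)}_p-\lambda)S_j\|$. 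Squaring, summing over $j$, using $\sum_j\|S_j\|^2\geq c_0>0$ (from $\sum\phi_j^2=1$ and bounded Jacobians in normal coordinates), and inserting the IMS bound yields $\operatorname{dist}(\lambda,\Sigma)\leq c'(\delta+p^{-1/4})$, as required.

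The scale $r_p=p^{-1/4}$ is forced: IMS commutator errors are of order $p^{-1/2}r_p^{-1}$ while Taylor remainders in the coefficient comparison are of order $r_p$, so $r_p=p^{-1/4}$ is the unique balance point, producing the stated exponent. The main obstacle, and the technical heart of the proof, is the coefficient comparison in the third step. Certain remainder terms, most notably $p(|A|^2-|\theta^{(x_j)}|^2)=O(p|y|^3)$, look too large when estimated by their sup norm on $\{|y|\leq r_p\}$ alone. The remedy is to avoid sup-norm bounds: one commutes such remainders with $\nabla^{L^p\otimes E}$ or with $|y|$, trading each power of $p|y|^2$ for a derivative of the quasimode at the cost of a factor $p^{-1/2}$. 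This magnetic-semiclassical Taylor-expansion bookkeeping, in the spirit of the Ma--Marinescu framework, is what ultimately extracts the advertised $p^{-1/4}$ rate.
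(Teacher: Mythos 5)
Your overall architecture — localization at scale $p^{-1/4}$, comparison with the frozen-coefficient model $\mathcal H^{(x_0)}_p$, and use of self-adjointness of the model to lower-bound by the distance to $\Sigma_{x_0}$ — does match the paper (the paper phrases it as a lower bound $\|(H_p-\lambda)u\|\ge (d(\lambda,\Sigma)-Cp^{-1/4})\|u\|$ in Proposition~\ref{p:estimate} rather than via Weyl sequences, but that is a cosmetic difference, and the remark after Proposition~\ref{p:estimate} confirms this is exactly how the paper re-derives Theorem~\ref{t:spectrum}). The problem lies in the coordinate and gauge choice, and you have in fact flagged the symptom yourself.

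Using geodesic normal coordinates with radial $\nabla^L$-parallel transport gives $A(y)=\theta^{(x_j)}(y)+O(|y|^2)$, and, as you note, the term $p\bigl(|A|^2-|\theta^{(x_j)}|^2\bigr)=O(p|y|^3)$ is of size $p\cdot p^{-3/4}=p^{1/4}$ on the support $\{|y|\le p^{-1/4}\}$; moreover the cross term $(A-\theta^{(x_j)})\cdot\nabla^{(x_j)}_p$ is only $O(|y|^2)\cdot O(\sqrt p)=O(1)$, not $O(p^{-1/4})$. You call the fix ``magnetic-semiclassical Taylor-expansion bookkeeping'' and claim that commuting with $\nabla^{L^p\otimes E}$ or $|y|$ trades each $p|y|^2$ for a derivative at cost $p^{-1/2}$, but this is not an argument: $y_j$ is not a commutator of the magnetic derivatives (one has $[\nabla^{(x_j)}_{p,e_k},\nabla^{(x_j)}_{p,e_\ell}]=-ip\mathbf B_{x_j}(e_k,e_\ell)$, a constant, and $[\nabla^{(x_j)}_{p,e_k},y_\ell]=\delta_{k\ell}$), and no concrete mechanism is supplied that turns the $O(p^{1/4})$ or $O(1)$ remainders into $O(p^{-1/4})$. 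This is a genuine gap.

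The paper sidesteps this entirely by \emph{not} using geodesic normal coordinates. It constructs, via the Moser/Darboux argument (see \eqref{e:kappaBx0} and the appendix of \cite{Ko22}), a chart $\varkappa_{x_0}$ in which $\varkappa_{x_0}^*\mathbf B$ is \emph{exactly} the constant $2$-form $\sum_k a_k(x_0)\,dZ_{2k-1}\wedge dZ_{2k}$, together with a trivialization $\tau^L_{x_0}$ whose connection one-form is \emph{exactly} $\theta^{(x_0)}$. In that gauge there is no $A-\theta^{(x_0)}$ remainder at all: the difference $H_p^{(x_0)}-\mathcal H^{(x_0)}_p$ in \eqref{e:TDeltaT-D1} has only a metric error $(g_{x_0}^{\ell m}-\delta^{\ell m})=O(|Z|)=O(p^{-1/4})$ multiplying $\frac1p\nabla^{(x_0)}_p\nabla^{(x_0)}_p$, a first-order term $\frac1p F_{\ell,x_0}\nabla^{(x_0)}_{p,e_\ell}$, and $V_{x_0}-V_{x_0}(0)=O(p^{-1/4})$. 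The $p^{-1/4}$ coefficient in front of $\frac1p\nabla\nabla$ closes precisely because of the model-operator a priori bound \eqref{e:2H}, which controls $\|\frac1p\nabla^{(x_0)}_p\nabla^{(x_0)}_p\,u\|$ by $d(\lambda,\Sigma_{x_0})^{-1}\|(\mathcal H^{(x_0)}_p-\lambda)u\|$. The trade-off against your gauge is instructive: Darboux coordinates give $g-\delta=O(|Z|)$ instead of $O(|y|^2)$, which looks worse, but this error only ever hits $\frac1p\nabla\nabla$, where the built-in $1/p$ makes it harmless; by contrast the geodesic gauge improves the metric error but creates connection-form remainders that multiply \emph{undivided} powers of $p$, which is fatal. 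To repair your proof you should replace the Ma--Marinescu normal-coordinate trivialization with the Darboux chart and $\theta^{(x_0)}$-gauge; then steps 2--4 of your plan go through essentially as the paper writes them.
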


When $X$ is compact, a stronger result, with $p^{-1/2}$ instead of $p^{-1/4}$ was proved by L. Charles \cite{charles21}. This estimate seems to be optimal. 

For an interval $[a,b]$, let $\mathcal K_{[a,b]}$ be the closed subset of $X$ given by 
\[
\mathcal K_{[a,b]}=\{x\in X \,:\,  \Sigma_x\cap [a,b]\neq\emptyset\}.
\]
In other words, $x\in \mathcal K_{[a,b]}$ iff $\Lambda_{\mathbf k,\mu}(x)\in [a,b]$ for some $\mathbf k \in \mathbb Z^n_+$ and $\mu=1,\ldots,\operatorname{rank}(E)$.

By \cite[Theorem 1.5]{Bochner-trace} (see also \cite[Theorem 1.3]{charles21}), if $x_0\not \in \mathcal K_{[a,b]}$, then the Schwartz kernel of the spectral projection $E_{[a,b]}$ of the operator $H_p$ associated with $[a,b]$ satisfies 
\begin{equation} \label{e:loc2}
\left|E_{[a,b]}(x_0,x_0)\right|=\mathcal O(p^{-\infty}),\quad p\to \infty.
\end{equation}

By this theorem, if $x_0\not \in \mathcal K_{[a,b]}$, then, for any sequence $\{u_{p}\in C^\infty(X,L^p\otimes E), p\in \mathbb N\}$ of eigenfunctions of $H_p$ with the corresponding eigenvalues $\lambda_{p}$ in $[a,b]$ for any $p\in \mathbb N$, we have
\[
 |u_{p}(x_0)|=\mathcal O(p^{-\infty}), \quad p\to \infty.
\]
In other words, the essential support of the sequence $\{u_{p}, p\in \mathbb N\}$ is contained in $\mathcal K_{[a,b]}$.

The main results of the paper are the following two theorems. 

\begin{theorem}\label{t:ess-spectrum}
Assume that, for an interval $[a,b]\subset \mathbb R$, the set $\mathcal K_{[a,b]}$
is compact. Then there exists $\epsilon>0$ such that for any $p\in \NN$ the spectrum of $H_{p}$ in $[a+\epsilon p^{-1/4},b-\epsilon p^{-1/4}]$ is discrete.  
\end{theorem}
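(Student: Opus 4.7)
The strategy is to establish discreteness via a Persson-type characterization of the essential spectrum: it is enough to exhibit a compact set $K\subset X$ such that, for every $p$ large enough and every $u\in C^\infty_c(X\setminus K, L^p\otimes E)$,
\[
\|(H_p - \lambda)u\| \ge \delta\, p^{-1/4}\,\|u\|,\qquad \lambda\in I_p := [a+\epsilon p^{-1/4}, b-\epsilon p^{-1/4}].
\]
Indeed, any Weyl sequence for $H_p$ at some $\lambda \in I_p$ would, by $H^2$-boundedness and Rellich compactness (available thanks to bounded geometry), converge to $0$ strongly on every compact set and hence concentrate outside $K$; a standard cutoff-and-commutator manipulation then contradicts the displayed lower bound, so $\sigma_{\mathrm{ess}}(H_p)\cap I_p = \emptyset$ and the spectrum there is discrete.

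The lower bound itself is built from two pieces. On the geometric side, if $x\notin \mathcal K_{[a,b]}$ then $\Sigma_x \cap [a,b] = \emptyset$ by definition, so every $\sigma \in \Sigma_x$ is either $<a$ or $>b$, which immediately yields
\[
\mathrm{dist}(\lambda,\Sigma_x)>\epsilon\, p^{-1/4}
\]
for all $\lambda\in I_p$, uniformly in $x\notin \mathcal K_{[a,b]}$. On the analytic side, the model-operator approximation behind Theorem~\ref{t:spectrum} provides, for sections $u$ supported in a ball around a point $x_0$ at the natural semiclassical scale, a bound of the form
\[
\|(H_p-\mathcal H^{(x_0)}_p)u\|\le C p^{-1/4}\|u\|
\]
uniformly in $x_0$. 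Since $\|(\mathcal H^{(x_0)}_p-\lambda)u\|\ge \mathrm{dist}(\lambda,\Sigma_{x_0})\|u\|$, combining the two yields the local bound $\|(H_p-\lambda)u\|\ge (\epsilon-C)p^{-1/4}\|u\|$ on any such ball around a point $x_0\notin \mathcal K_{[a,b]}$, which is strictly positive once $\epsilon>C$.

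The passage from local to global estimate is carried out by covering $X\setminus K$ (with $K$ a compact neighborhood of $\mathcal K_{[a,b]}$) by balls of the above scale with bounded overlap (available by bounded geometry) and applying the IMS quadratic-form identity for a subordinate partition of unity $\{\chi_\alpha\}$. The commutator remainders are of order $p^{-1/2}$, hence dominated by $\epsilon p^{-1/4}$ after a further fixed-size adjustment of $\epsilon$. Summing the local bounds produces the required $\|(H_p-\lambda)u\|\ge \delta p^{-1/4}\|u\|$ on $C^\infty_c(X\setminus K, L^p\otimes E)$, closing the Persson argument.

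The main obstacle is this last assembly step: the spectral gap $\epsilon p^{-1/4}$, the model-operator error $Cp^{-1/4}$, and the IMS commutator losses sit at comparable powers of $p^{-1/2}$ and $p^{-1/4}$, so $\epsilon$ must simultaneously dominate $C$ and the commutator constant. Every constant must be uniform over the non-compact manifold, which rests on the bounded-geometry assumptions on $(X,g)$, $L$, $E$, and $V$, together with the uniform positivity $b_0>0$ in \eqref{e:uniform-positive} that prevents $\Lambda_{\mathbf k,\mu}(x)$ with $|\mathbf k|$ large from clustering near $I_p$.
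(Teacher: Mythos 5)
Your overall strategy coincides with the paper's (Section~2): first a Persson/Mohamed-type criterion reducing absence of essential spectrum to a uniform lower bound $\|(H_p-\lambda)u\|\geq\delta p^{-1/4}\|u\|$ on sections compactly supported away from a compact set containing $\mathcal K_{[a,b]}$, and then an IMS-localization at scale $p^{-1/4}$ together with comparison to the model operator $\mathcal H^{(x_0)}_p$ at each center of the cover. Your observation that $d(\lambda,\Sigma_{x})\geq\epsilon p^{-1/4}$ for $x\notin\mathcal K_{[a,b]}$ and $\lambda\in[a+\epsilon p^{-1/4},b-\epsilon p^{-1/4}]$, and your accounting of the commutator losses, also match the paper's Proposition~\ref{p:estimate} and the lemma that follows it.

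There is, however, a genuine gap in the way you close the local estimate. The displayed inequality $\|(H_p-\mathcal H^{(x_0)}_p)u\|\le Cp^{-1/4}\|u\|$ is not an honest $L^2$-operator-norm bound and cannot hold: the difference $H_p^{(x_0)}-\mathcal H^{(x_0)}_p$ (see \eqref{e:TDeltaT-D1}) contains the second-order term $\frac 1p\sum_{\ell,m}(g_{x_0}^{\ell m}-\delta^{\ell m})\nabla^{(x_0)}_{p,e_\ell}\nabla^{(x_0)}_{p,e_m}$, and while the coefficients are indeed $O(p^{-1/4})$ on the $p^{-1/4}$-ball, the operator $\frac 1p\nabla^{(x_0)}_p\nabla^{(x_0)}_p$ is unbounded on $L^2$ (the same issue appears, one order down, for the first-order remainder). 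The paper's Proposition~\ref{p:estimate} avoids this by not bounding the error against $\|u\|$ at all: it uses the uniform resolvent estimates \eqref{e:0H}--\eqref{e:2H} for the model operator, namely that $\frac{1}{\sqrt p}\nabla^{(x_0)}_p R^{(x_0)}_p(\lambda)$ and $\frac 1p\nabla^{(x_0)}_{p}\nabla^{(x_0)}_{p}R^{(x_0)}_p(\lambda)$ are uniformly $O(d(\lambda,\Sigma_{x_0})^{-1})$, to obtain instead
\[
\bigl\|(H_p^{(x_0)}-\mathcal H^{(x_0)}_p)u\bigr\|\le Cp^{-1/4}\,d(\lambda,\Sigma_{x_0})^{-1}\bigl\|(\mathcal H^{(x_0)}_p-\lambda)u\bigr\|,
\]
and then absorbs this into $\|(\mathcal H^{(x_0)}_p-\lambda)u\|\ge d(\lambda,\Sigma_{x_0})\|u\|$ to get $\|(H_p^{(x_0)}-\lambda)u\|\ge (d(\lambda,\Sigma_{x_0})-Cp^{-1/4})\|u\|$. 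This resolvent-estimate absorption is the crux of the local comparison and is missing from your sketch; without it, the lower bound you need cannot be established from the pieces you list.
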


As in Theorem~\ref{t:spectrum}, the order $p^{-1/4}$ doesn't seem to be optimal and, probably, can be improved.

\begin{thm}\label{t:eigenest}
Under the assumptions of Theorem \ref{t:ess-spectrum}, for any $[a_1,b_1]\subset (a,b)$, there exist $p_0\in \mathbb N$ and $C, c>0$ such that, for any $u_p\in C^\infty(X, L^p\otimes E)\cap L^2(X, L^p\otimes E)$ such that 
\[
H_pu_p=\lambda_pu_p
\]
with $p>p_0$ and $\lambda_p\in [a_1,b_1]$, we have  
\[
\int_{\Omega} e^{2c\sqrt{p} d (x,\mathcal K_{[a,b]})}|u_p(x)|^2dx \leq C\|u_p\|^2.
\]
\end{thm}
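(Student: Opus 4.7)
My strategy is a Combes--Thomas argument at the semiclassical scale $h=p^{-1/2}$, driven by a uniform spectral gap of the model operators outside $\mathcal K_{[a,b]}$.

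First I establish this gap: because $\mathcal K_{[a,b]}$ is compact, $[a_1,b_1]\subset(a,b)$, and the bounded-geometry hypothesis makes the continuous functions $a_j(x)$ and $V_\mu(x)$ uniformly bounded (so that only finitely many $\Lambda_{\mathbf k,\mu}$ can reach $[a,b]$), a compactness/continuity argument produces $\delta>0$ with $d(\Sigma_x,[a_1,b_1])\ge\delta$ for every $x\notin\mathcal K_{[a,b]}$; equivalently, $\|(\mathcal H_p^{(x_0)}-\lambda)v\|\ge\delta\|v\|$ for $\lambda\in[a_1,b_1]$, $x_0\notin\mathcal K_{[a,b]}$. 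Using the quasi-model approximation of $H_p$ by $\mathcal H_p^{(x_0)}$ on balls $B(x_0,r_p)$ of radius $r_p\sim p^{-1/4}$ (which underlies Theorem~\ref{t:spectrum}), and a quadratic partition of unity $\sum_j\chi_j^2\equiv 1$ subordinate to such a cover together with the IMS formula (whose commutator errors are $O(p^{-1/2})$ at this scale), one transfers this to the global operator lower bound
\[
\|(H_p-\lambda)v\|\ge\tfrac{\delta}{4}\|v\|,\quad \lambda\in[a_1,b_1],
\]
for every $v$ supported in $X\setminus U$, where $U$ is a suitable bounded neighborhood of $\mathcal K_{[a,b]}$.

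Now the exponential twist. Take a Lipschitz $\tilde\psi$ with $|\nabla\tilde\psi|\le 1$, vanishing on a neighborhood of $\mathcal K_{[a,b]}$ larger than the cutoff region used below, agreeing with $d(\cdot,\mathcal K_{[a,b]})$ outside $U$, and truncated at height $R$ (the final estimate then follows by monotone convergence as $R\to\infty$). Set $\phi_p=c\sqrt p\,\tilde\psi$. A direct computation gives
\[
e^{\phi_p}H_p e^{-\phi_p}=H_p+\tfrac{2}{p}\nabla^{L^p\otimes E}_{(d\phi_p)^\sharp}-\tfrac{1}{p}\bigl(|d\phi_p|^2+\Delta\phi_p\bigr),
\]
and the last two (multiplication) terms are bounded in norm by $c^2+O(p^{-1/2})$. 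Using the a priori bound $\|\nabla^{L^p\otimes E}v\|^2\le p(\|(H_p-\lambda)v\|+C)\|v\|$ (which follows from the definition of $H_p$) to absorb the first-order term, one checks that for $c$ small enough the lower bound survives the perturbation:
\[
\|(e^{\phi_p}H_p e^{-\phi_p}-\lambda)v\|\ge(\delta/8)\|v\|\quad\text{for }v\text{ supported in }X\setminus U,\ \lambda\in[a_1,b_1],
\]
uniformly in $R$.

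Finally, let $\chi_0$ be a cutoff equal to $1$ on a smaller neighborhood of $\mathcal K_{[a,b]}$, supported where $\tilde\psi\equiv 0$, with $1-\chi_0$ supported in $X\setminus U$. Setting $v_p=e^{\phi_p}(1-\chi_0)u_p$ and using $H_pu_p=\lambda_pu_p$, one gets $(e^{\phi_p}H_p e^{-\phi_p}-\lambda_p)v_p=-e^{\phi_p}[H_p,\chi_0]u_p$, supported where $\phi_p\equiv 0$, with $\|[H_p,\chi_0]u_p\|\le Cp^{-1/2}\|u_p\|$ (using $\|\nabla^{L^p\otimes E}u_p\|=O(\sqrt p)\|u_p\|$ from the eigenvalue equation and $[H_p,\chi_0]=\frac{1}{p}(\Delta\chi_0-2\nabla\chi_0\cdot\nabla^{L^p\otimes E})$). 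The lower bound of the previous step then yields $\|v_p\|\le C'\|u_p\|$; combined with the trivial estimate $\int_{\operatorname{supp}\chi_0}e^{2\phi_p}|u_p|^2\le C''\|u_p\|^2$ and monotone convergence in $R$, this yields the theorem. The main obstacle is the passage from the local spectral gap of $\mathcal H_p^{(x_0)}$ to a \emph{global} operator lower bound for $H_p$ via the IMS/partition-of-unity argument at the critical scale $r_p\sim p^{-1/4}$, where the commutator errors must be carefully balanced against $\delta$; a secondary subtlety is that the Combes--Thomas perturbation must be controlled at the level of operator norms (not quadratic forms), since the local model has spectrum on both sides of $\lambda_p$.
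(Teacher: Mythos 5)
Your overall strategy is the same as the paper's: a uniform lower bound $\|(H_p-\lambda)u\|\gtrsim\|u\|$ away from $\mathcal K_{[a,b]}$ (the paper's Proposition~\ref{p:estimate}), followed by a Combes--Thomas conjugation with the weight $e^{c\sqrt p\,\tilde\psi}$ and an absorption of the conjugation error using $\|\nabla^{L^p\otimes E}v\|^2\le p\bigl(\|(H_p-\lambda)v\|+C\|v\|\bigr)\|v\|$. Your particular arrangement --- setting $v_p=e^{\phi_p}(1-\chi_0)u_p$ so that the error $(e^{\phi_p}H_pe^{-\phi_p}-\lambda_p)v_p=-e^{\phi_p}[H_p,\chi_0]u_p$ has its support where $\phi_p\equiv 0$, hence equals the \emph{unweighted} $-[H_p,\chi_0]u_p$, which is $O(p^{-1/2})\|u_p\|$ by the eigenvalue equation --- is genuinely cleaner than the paper's. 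The paper instead applies the cutoff to the weighted function $v_p=e^{\tau\sqrt p\Phi_p}u_p$ and therefore has to establish a nontrivial estimate $\|d\phi_p\cdot\nabla^{L^p\otimes E}v_p\|\le Cp\|\psi_pv_p\|$ (its Lemma~\ref{l:nabla-est}), which costs a full page of local analysis; your device bypasses that entirely.

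There is, however, a genuine error in the geometric set-up of the weight and cutoff, and as written your construction is self-contradictory. You take $U$ to be a \emph{neighborhood} of $\mathcal K_{[a,b]}$ (so $U\supsetneq\mathcal K_{[a,b]}$), $\chi_0\equiv1$ on $U$ with $\operatorname{supp}\chi_0\subset\{\tilde\psi=0\}=:W$, and $\tilde\psi=d(\cdot,\mathcal K_{[a,b]})$ outside $U$. But then on $W\setminus U\supset\operatorname{supp}d\chi_0$ you would need simultaneously $\tilde\psi=0$ and $\tilde\psi=d$, hence $d=0$ there, i.e.\ $W\setminus U\subset\mathcal K_{[a,b]}$; yet $W\setminus U$ lies outside $U\supset\mathcal K_{[a,b]}$, a contradiction unless the annulus is empty and $\chi_0$ is an indicator function. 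Concretely, the final inequality only controls $\int_{\Omega\setminus\operatorname{supp}\chi_0}e^{2c\sqrt p\,\tilde\psi}|u_p|^2$, and on the nonempty region $\Omega\cap\operatorname{supp}\chi_0$ (where $d>0$ can be of order one but $\tilde\psi=0$) the factor $e^{2c\sqrt p\,d}$ blows up as $p\to\infty$ with nothing to absorb it. The fix is the point the paper's proof exploits: Proposition~\ref{p:estimate} gives the lower bound for $u$ supported not just in $\Omega$, but in the strictly larger set $\Omega_{[a_2,b_2]}=X\setminus\mathcal K_{[a_2,b_2]}$ with $[a_1,b_1]\subset[a_2,b_2]\subset(a,b)$, where $\mathcal K_{[a_2,b_2]}$ is a compact \emph{subset} of $\mathcal K_{[a,b]}$ at positive distance $\ge2\epsilon$ from $\Omega$ (this uses the uniform continuity of the $\Lambda_{\mathbf k,\mu}$ and the boundedness of the relevant $|\mathbf k|$). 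Thus $U$ should be $\mathcal K_{[a_2,b_2]}$, not a neighborhood of $\mathcal K_{[a,b]}$; then one can choose $\operatorname{supp}\chi_0$ strictly inside $\mathcal K_{[a,b]}$, the weight vanishes on $\operatorname{supp}\chi_0$ while agreeing with $d(\cdot,\mathcal K_{[a,b]})$ (up to $O(p^{-1/2})$) on all of $\Omega$, and your argument closes. Two minor additional slips: the IMS commutator error at scale $p^{-1/4}$ is $O(p^{-1/4})$, not $O(p^{-1/2})$ (but still vanishes, so the conclusion is unaffected); and the gradient bound should read $\|\nabla^{L^p\otimes E}v\|^2\le p\bigl(\|(H_p-\lambda)v\|+C\|v\|\bigr)\|v\|$, with $\|v\|$ inside the parenthesis.
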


\subsection{Discussion}
Our study is partly motivated by the spectral theory of magnetic Schr\"odinger operators with magnetic walls. 
The magnetic wall is usually modeled by the magnetic field, whose intensity has a fast transition along a hypersurface (an interface). A typical example was introduced by Iwatsuka in \cite{Iwa}. Iwatsuka model is given by the magnetic field in $\mathbb R^2$ having positive bounded intensity $b(x_1,x_2)=b(x_2)$ that converges to two distinct constants $b_\pm$ as $x_2\to\pm \infty$. The extreme version of this model is the  magnetic field with intensity $b_->0$ for $x_2<0$ and $b_+>b_-$ for $x_2>0$, with $b_+-b_-$ large enough. Since \cite{Iwa}, there is an extensive literature devoted to the study of this class of models and its generalizations (see, for instance, \cite{AHK,DGR,DHS,FHKR,MP} and references therein). Closely related models are magnetic quantum Hall systems described by the magnetic Schr\"odinger operator with Dirichlet boundary conditions in a compact domain of the Euclidean space (this can be treated as a hard wall, or as an infinite electric potential outside of the domain; see, for instance, \cite{DeB-P,FGW,GV21} and references therein).

The analysis of such models distinguishes between edge and bulk behavior for the states associated with the Hamiltonian. We are interested in the edge states. These states are localized near the interface and generate a current along the interface, classically described by the so-called snake orbits, first introduced in \cite{RP}. The edge states exist as soon as the energy lies strictly in a gap of the set of the Landau levels. If the interface is compact, this part of the spectrum is discrete. 

The existence of the edge states was proved in \cite{DeB-P} for a constant magnetic field in a half-plane, in \cite{FGW} for a constant magnetic field in some domains in the Euclidean plane with Dirichlet boundary conditions and in \cite{DGR,DHS} for Iwatsuka models.  
In \cite{GV21}, the authors studied the edge states for the magnetic Schr\"odinger operator with Dirichlet boundary conditions in a simply-connected domain with compact boundary. In \cite{GV23}, the study of the edge states obtained for Iwatsuka models extended to the case of a general regular curve. Here the localization and propagation properties of the edge states are investigated. This study was significantly improved in \cite{FLRV24}, where the authors consider the Robin Laplacian on a smooth bounded two-dimensional domain in the presence of a constant magnetic field and obtain a uniform description of the spectrum located between the Landau levels in the semiclassical limit. In particular, they established  exponential localization near the boundary of the corresponding edge states, which was not considered in \cite{GV21,GV23}.

In our paper, we address the question of exponential localization in a very general setting of the Bochner-Schr\"odinger operator on a manifold of bounded geometry. In the above notation, one can consider the set $\mathcal K_{[a,b]}$ as an interface. Unlike \cite{GV23}, the transition of the magnetic field along the interface is not fast (we hope to discuss this case elsewhere), but the magnetic field is not constant.  Instead of the set of Landau levels associated with limiting values of the intensity of the magnetic field at infinity as in the Iwatsuka model, we are dealing with the set of local Landau levels $\Sigma_x$ assigned to each point $x$ of the manifold. Our choice of the interface ensures that the set of local Landau levels in the bulk has a gap $(a,b)$. From this point of view, we prove that if the interface $\mathcal K_{[a,b]}$ is compact, then the spectrum of the operator in $[a,b]$ is discrete, and the corresponding eigensections are edge states. Moreover, they are exponentially localized away the interface $\mathcal K_{[a,b]}$. 

Asymptotic localization of eigenfunctions of the magnetic Schr\"odinger operator associated with eigenvalues below the bottom of the essential spectrum usually follows from Agmon type estimates. But when we consider eigenvalues in gaps of the essential spectrum, such a method doesn't work. Instead, we use some weighted norm estimates for the operator. This is a slight modification of the method used in \cite{Ko22,Bochner-trace} (see also the references therein) to prove exponential localization away the diagonal for Schwartz kernels of various functions of the Bochner-Schr\"odinger operator and in \cite{FLRV24} to prove exponential localization of the boundary states of the Robin magnetic  Laplacian away the boundary, where weighted estimates for the resolvent have been used. 

The paper is organized as follows. In Section~\ref{s:discrete}, we prove Theorem \ref{t:ess-spectrum}. In Section~\ref{s:eigenest}, we prove Theorem \ref{t:eigenest}.
 
 \section{Discreteness of the spectrum} \label{s:discrete}
 
 This section is devoted to the proof of Theorem \ref{t:ess-spectrum}.
 
\subsection{Lower bound for the norm} 
 
 The following proposition plays a crucial role both in the proof of Theorem \ref{t:ess-spectrum} and in the proof of Theorem \ref{t:eigenest}.
 
 \begin{prop}\label{p:estimate}
 Let $[a,b]\subset \mathbb R$ be a bounded interval and 
 $$
 \Omega_{[a,b]}:=X\setminus \mathcal K_{[a,b]}=\{x\in X : \Sigma_x\cap [a,b]=\emptyset\}.
 $$ 
 Then there exist $C>0$ and $p_0\in\mathbb N$ such that, for any $\lambda\in (a,b)$, for any $p>p_0$ and for any $u\in C^\infty(X, L^p\otimes E)$ compactly supported in $\Omega_{[a,b]}$, we have 
 \[
 \|(H_p-\lambda)u\|\geq (d(\lambda,\Sigma)-Cp^{-1/4})\|u\|,
 \]
 where $d(\lambda,\Sigma)$ stands for the distance from $\lambda$ to $\Sigma$. 
 \end{prop}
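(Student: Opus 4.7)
The plan is to reduce the estimate to the spectral theorem for the model operators $\mathcal H^{(x_0)}_p$ via a partition-of-unity argument, in the spirit of the proof of Theorem~\ref{t:spectrum}. Since $\Sigma_{x_0}\subseteq\Sigma$ for every $x_0\in X$, the spectral theorem for the self-adjoint operator $\mathcal H^{(x_0)}_p$ gives the model bound
\[
\|(\mathcal H^{(x_0)}_p-\lambda)v\|_{L^2(T_{x_0}X)}\ge d(\lambda,\Sigma_{x_0})\|v\|\ge d(\lambda,\Sigma)\|v\|
\]
for any $v$ in the domain of $\mathcal H^{(x_0)}_p$. The local comparison between $H_p$ and $\mathcal H^{(x_0)}_p$ is standard: in normal coordinates at $x_0$ with the parallel-transport trivialization of $L^p\otimes E$ along radial geodesics, the connection 1-form of $\nabla^{L^p}$ equals $p\theta^{(x_0)}+pO(|x|^2)$, the metric equals the Euclidean one plus $O(|x|^2)$, and $V=V(x_0)+O(|x|)$. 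A direct Taylor expansion then gives
\[
\|(H_p-\mathcal H^{(x_0)}_p)\widetilde v\|\le Cr\bigl(\|H_p v\|+\|v\|\bigr)
\]
whenever $v$ is supported in $B(x_0,r)$, where $\widetilde v$ is the push-forward of $v$ to $T_{x_0}X$.

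Next, I cover the compact support of $u$ by finitely many balls $B(x_j,r)$ with $r=cp^{-1/4}$ (tuned so that the Taylor error is $O(p^{-1/4})$) and $x_j\in\Omega_{[a,b]}$, and fix a quadratic partition of unity $\{\phi_j^2\}$ subordinate to this cover with $\sum_j\phi_j^2=1$ on $\operatorname{supp}(u)$, $\|\nabla\phi_j\|_\infty=O(p^{1/4})$ and $\|\Delta_g\phi_j\|_\infty=O(p^{1/2})$. Using
\[
\|(H_p-\lambda)u\|^2=\sum_j\|\phi_j(H_p-\lambda)u\|^2,\qquad \phi_j(H_p-\lambda)u=(H_p-\lambda)(\phi_j u)-[H_p,\phi_j]u,
\]
the local estimate on each $\phi_j u$, and the inequality $\|a+b\|^2\ge(1-\epsilon)\|a\|^2-\epsilon^{-1}\|b\|^2$ yields
\[
\|(H_p-\lambda)u\|^2\ge(1-\epsilon)\bigl(d(\lambda,\Sigma)-Cp^{-1/4}\bigr)^2\|u\|^2-\epsilon^{-1}\sum_j\|[H_p,\phi_j]u\|^2.
\]

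The main obstacle is to show that the commutator sum contributes only $O(p^{-1/4})\|u\|^2$ with an appropriate choice of $\epsilon$. Since $[H_p,\phi_j]=-(2/p)\nabla^{L^p\otimes E}_{\mathrm{grad}\,\phi_j}-(1/p)(\Delta_g\phi_j)\cdot I$, a finite-overlap argument gives
\[
\sum_j\|[H_p,\phi_j]u\|^2\le Cp^{-3/2}\|\nabla^{L^p\otimes E}u\|^2+Cp^{-1}\|u\|^2,
\]
and the key a priori bound on $\|\nabla^{L^p\otimes E}u\|$, valid for any compactly supported $u$ without assuming that it is an eigenfunction, comes from the Bochner identity
\[
\tfrac{1}{p}\|\nabla^{L^p\otimes E}u\|^2=\langle H_pu,u\rangle-\langle Vu,u\rangle\le\|(H_p-\lambda)u\|\,\|u\|+C\|u\|^2.
\]
Combining these bounds turns the commutator sum into $Cp^{-1/2}\bigl(\|(H_p-\lambda)u\|\,\|u\|+\|u\|^2\bigr)$. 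Taking $\epsilon=p^{-1/4}$ and absorbing the resulting cross term $Cp^{-1/4}\|(H_p-\lambda)u\|\,\|u\|$ into the left-hand side by Young's inequality, valid once $p\ge p_0$, produces the claimed lower bound.
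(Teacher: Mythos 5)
Your overall architecture — IMS localization on a $p^{-1/4}$-scale partition of unity, comparison with the model operator $\mathcal H^{(x_0)}_p$, the spectral bound $\|(\mathcal H^{(x_0)}_p-\lambda)v\|\geq d(\lambda,\Sigma)\|v\|$, and an a priori bound on $\nabla^{L^p\otimes E}u$ to control the commutator — is the same as the paper's. But there is a genuine gap in your local comparison estimate, and it is not a cosmetic one.

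You work in normal coordinates with the parallel-transport trivialization, where the connection one-form of $\nabla^{L^p}$ is $p\theta^{(x_0)}+p\,O(|x|^2)$. Write $\nabla^{L^p\otimes E}=\nabla^{(x_0)}_p+\delta A$ with $\delta A=O(p|x|^2)$ a multiplication operator. Then
\[
H_p-\mathcal H^{(x_0)}_p=\frac{1}{p}\bigl((\nabla^{(x_0)}_p)^*\delta A+(\delta A)^*\nabla^{(x_0)}_p+|\delta A|^2\bigr)+\text{(metric/potential terms)},
\]
and on $B(x_0,r)$ the last quadratic term $\tfrac{1}{p}|\delta A|^2$ alone is $O(p\,r^4)$, while the cross term is $O(r^2)\nabla^{(x_0)}_p$, of size $O(r^2\sqrt{p})$ after using the a priori bound $\|\nabla^{(x_0)}_pv\|\lesssim \sqrt{p}\,(\|\mathcal H^{(x_0)}_pv\|+\|v\|)$. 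At $r=p^{-1/4}$ both are $O(1)$, not $O(r)=O(p^{-1/4})$. So the claimed bound $\|(H_p-\mathcal H^{(x_0)}_p)\widetilde v\|\leq Cr(\|H_pv\|+\|v\|)$ is false at the scale you need, and the whole argument collapses: the comparison error is of order $1$, not $p^{-1/4}$.

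The paper avoids this by \emph{not} using normal coordinates. It chooses Moser--Darboux coordinates $\varkappa_{x_0}$ in which $\varkappa_{x_0}^*\mathbf B$ is \emph{exactly} the constant form $\mathbf B_{x_0}$, and a trivialization $\tau^L_{x_0}$ of $L$ in which the connection one-form of $\nabla^L$ is \emph{exactly} $\theta^{(x_0)}$. Then the $p$-scaled magnetic discrepancy vanishes identically. The remaining errors in \eqref{e:TDeltaT-D1} come only from the metric, $g^{\ell m}_{x_0}(Z)-\delta^{\ell m}=O(|Z|)$, multiplying $\tfrac{1}{p}\nabla^{(x_0)}_{p,e_\ell}\nabla^{(x_0)}_{p,e_m}$ (controlled by the model a priori estimate \eqref{e:2H}), from the $E$-connection, and from $V-V(x_0)=O(|Z|)$; all of these are genuinely $O(p^{-1/4})$ on $B(0,p^{-1/4})$. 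The essential missing ingredient in your proposal is this adapted gauge, not the Taylor expansion itself. (A smaller point: the paper uses the exact IMS equality rather than the $(1-\epsilon)/\epsilon^{-1}$ inequality, which lets it avoid the absorption-of-$\|(H_p-\lambda)u\|^2$ step and keeps the leading constant equal to $1$ cleanly; your $\epsilon=p^{-1/4}$ trick can be made to work, but needs a more careful accounting than you give.)
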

 
 \begin{remark}
 In case $\Omega_{[a,b]}=X$, we get a new proof of Theorem~\ref{t:spectrum}.
 \end{remark}
 
The proof of Proposition \ref{p:estimate} will be given in Section~\ref{s:sppr}. First, we complete the proof of Theorem \ref{t:ess-spectrum}.
 
 \subsection{Proof of Theorem \ref{t:ess-spectrum}} 
 Theorem \ref{t:ess-spectrum} follows immediately from Proposition \ref{p:estimate} and the following manifold version of \cite[Lemma 2.1]{M88}.
 
Let $(\mathcal E,h^{\mathcal E})$ be a Hermitian vector bundle on $X$ with a Hermitian connection $\nabla^{\mathcal E}$. We suppose that $\mathcal E$ has bounded geometry. Let $V\in C^\infty_b(X,\operatorname{End}(\mathcal E))$ be a self-adjoint endomorphism. Consider the Bochner-Schr\"odinger operator $H$ acting on $C^\infty(X,\mathcal E)$ by
\[
H=\Delta^{\mathcal E}+V. 
\] 
 
 \begin{lemma}
Let $\lambda\in\mathbb R$. Suppose that there exist  $\delta>0$ and a compact subset $K\subset X$ such that
\begin{equation}
 \|(H-\lambda)u\|\geq \delta\|u\|
 \end{equation}
 for any $u\in H^2(X, \mathcal E)$ supported in $X\setminus K$. Then $\lambda\not\in \sigma_{\rm ess}(H).$
 \end{lemma}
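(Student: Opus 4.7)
The plan is to use the Weyl criterion for essential spectrum together with the Rellich compact embedding on manifolds of bounded geometry. Recall that $\lambda\in\sigma_{\rm ess}(H)$ if and only if there exists a \emph{singular sequence}, i.e.\ a sequence $u_n\in\mathrm{Dom}(H)=H^2(X,\mathcal E)$ with $\|u_n\|=1$, $u_n\rightharpoonup 0$ weakly in $L^2(X,\mathcal E)$, and $(H-\lambda)u_n\to 0$ in $L^2$. I will suppose such a sequence exists and derive a contradiction with the hypothesis.

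First I would fix a cutoff: using bounded geometry, choose $\chi\in C^\infty_b(X)$ with $0\le\chi\le 1$, $\chi\equiv 0$ on an open neighborhood $U\supset K$, and $\chi\equiv 1$ outside a larger compact set $K'\supset\overline U$, so that $\operatorname{supp}(d\chi)\subset K'\setminus U$ is compact and $\chi$ together with its derivatives up to order two are uniformly bounded. Since $\chi u_n\in H^2(X,\mathcal E)$ is supported in $X\setminus K$, the hypothesis gives
\[
\delta\,\|\chi u_n\|\leq \|(H-\lambda)(\chi u_n)\|
\leq \|\chi(H-\lambda)u_n\| + \|[H,\chi]u_n\|.
\]
The first term is bounded by $\|(H-\lambda)u_n\|$, which tends to $0$ by assumption. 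The second involves the commutator $[H,\chi]=[\Delta^{\mathcal E},\chi]$, which is a first-order differential operator with smooth bounded coefficients supported in $K'\setminus U$.

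Next I would show $[H,\chi]u_n\to 0$ in $L^2$. Since $(H-\lambda)u_n$ is bounded in $L^2$ and $V$ is bounded, $\Delta^{\mathcal E}u_n$ is bounded in $L^2$. Using the $G$\aa{}rding inequality for the Bochner Laplacian on a manifold of bounded geometry, $\{u_n\}$ is bounded in $H^2(X,\mathcal E)$. By the Rellich compactness theorem on a manifold of bounded geometry, restriction to the compact set $K'$ yields a compact embedding $H^2_{\rm loc}\hookrightarrow H^1_{\rm loc}$ on a neighborhood of $K'$. Combined with $u_n\rightharpoonup 0$ in $L^2$, this forces $u_n\to 0$ strongly in $H^1$ on that neighborhood, so $[H,\chi]u_n\to 0$ in $L^2$. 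Consequently $\|\chi u_n\|\to 0$.

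Finally, the same local compactness argument applied to $(1-\chi)u_n$, which is supported in the compact set $K'$ and bounded in $H^2$, shows $(1-\chi)u_n\to 0$ strongly in $L^2$. Together with $\|\chi u_n\|\to 0$ this gives $\|u_n\|\to 0$, contradicting $\|u_n\|=1$. Hence no singular sequence exists and $\lambda\notin\sigma_{\rm ess}(H)$.

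The main obstacle I expect is the commutator estimate, because $H$ is a second-order differential operator so $[H,\chi]$ involves first derivatives of $u_n$; the argument hinges on the uniform $H^2$-bound for the singular sequence (which requires elliptic regularity adapted to the bounded-geometry setting) together with the local Rellich compactness needed to upgrade weak $L^2$-convergence to strong $H^1$-convergence on $\operatorname{supp}(d\chi)$. The remaining pieces—cutoff construction and the product rule—are routine on a manifold of bounded geometry.
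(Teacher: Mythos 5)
Your proof is correct, and while it shares the basic skeleton with the paper's argument (assume a Weyl sequence, cut off near $K$, estimate the commutator), the two dispose of the error terms differently. The paper does not fix a single cutoff; it constructs a family $\chi_m$, equal to $1$ near $K$, whose gradients and Laplacians scale like $1/m$ and $1/m^2$, so that $\|[H,\chi_m]u_n\|\lesssim \frac{1}{m}(\|(H-\lambda)u_n\|+\|u_n\|)$ is small uniformly in $n$ from the $H^1$ bound alone, with no compactness. It then applies Rellich once, to a \emph{fixed} $m$, to extract a subsequential limit $v$ of $\chi_m u_n$ with $\|v\|\geq 1-\epsilon$, and reaches a contradiction by computing $\|v\|^2=\lim_n\langle\chi_m u_n,\chi_m u_{n+1}\rangle$ and exploiting the orthonormality of the Weyl sequence to force $\|v\|^2<2\epsilon+\epsilon^2$. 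You instead use a single cutoff and lean on the weak nullity $u_n\rightharpoonup 0$: since $(u_n)$ is bounded in $H^2$ (elliptic regularity on bounded geometry), local Rellich compactness upgrades $u_n\rightharpoonup 0$ to strong $H^1$ convergence on the compact set carrying $d\chi$, which kills $[H,\chi]u_n$, and a second Rellich application kills $(1-\chi)u_n$. Your route is arguably more streamlined and avoids the inner-product bookkeeping; the paper's shrinking-cutoff device has the modest advantage of controlling the commutator without invoking weak convergence or compactness, but for this application both work and require the same underlying inputs (bounded-geometry elliptic estimates and local Rellich).
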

 
 \begin{proof}
On the contrary, assume that $\lambda\in \sigma_{\rm ess}(H)$. Then there exists is an orthonormal sequence $(u_n)_{n\in\NN}$ in $L^2(X,\mathcal E)$ such that $u_n\in H^2(X,\mathcal E)$ for any $n\in\NN$ and 
\begin{equation}\label{e:weyl}
\|(H-\lambda)u_n\|\to 0, \quad n\to \infty. 
\end{equation}

There exists a sequence $\chi_m\in C^\infty_c(X)$ such that $0\leq \chi_m(x)\leq 1$ for any $x\in X$, $\chi_m\equiv 1$ in a neighborhood of $K$ and $$
|d\chi_m(x)|\leq \frac{c}{m}, \quad |\Delta\chi_m(x)|<\frac{c}{m^2}, \quad x\in X,
$$ 
where $c>0$ is independent of $m$. Such a sequence can be easily constructed, using a ``smoothed  distance''  function  $\widetilde{d}\in {C}^\infty(X\times X)$ (see, for instance, \cite[Proposition 4,1]{Kor91}), satisfying 
the following conditions:
 
(1) there is a constant $r>0$ such that
\[
\big\vert \widetilde{d}(x,y) - d (x,y)\big\vert  < \gamma,\quad x, y\in X,
\]
where $d$ stands for the distance function on $(X,g)$;

(2) for any $k>0$, there exists $C_k>0$ such that, for any multi-index 
$\beta$ with $|\beta|=k$,
\[
\big\vert \partial^\beta_{x}\widetilde{d}(x,y)\big\vert < C_{k},\quad
x, y\in X,
\]
where the derivatives are taken with respect to normal coordinates 
defined by the exponential map at $x$.

Now we assume that $K$ is contained in some open ball $B(x_0,r)\subset X$ of radius $r>0$ centered at $x_0\in X$. Take a function $\chi\in C^\infty_c(\mathbb R)$ such that $0\leq \chi(t)\leq 1$ for any $t\in \mathbb R$, $\chi(t)=1$ if $|t| \leq r+\gamma$ and put 
$$
\chi_m(x)=\chi\left(\frac 1m\widetilde{d}(x,x_0)\right),\quad x\in X.
$$
It is easy to check that the sequence $(\chi_m)_{m\in \NN}$ satisfies the desired conditions. 

Since
\[
[H, \chi_m]=-2d\chi_m\cdot\nabla^{\mathcal E} +\Delta\chi_m,
\]
we get
\[
\|[H, \chi_m]u_n\|\leq \frac{c}{m}\|\nabla^{\mathcal E}u_n\|+\frac{c}{m^2}\|u_n\|.
\]
We can estimate $\|\nabla^{\mathcal E}u_n\|$ in the following way:
\begin{multline*}
\|\nabla^{\mathcal E}u_n\|^2=(\Delta^{\mathcal E}u_n,u_n)=((H-\lambda)u_n,u_n)+((\lambda-V)u_n,u_n)\\ \leq \|(H-\lambda)u_n\|^2+C\|u_n\|^2,
\end{multline*}
and, therefore,
\[
\|\nabla^{\mathcal E}u_n\|\leq \|(H-\lambda)u_n\|+C\|u_n\|. 
\]
This gives the estimate
\begin{equation}\label{e:Hchim}
\|[H,\chi_m]u_n\|\leq \frac{C}{m}(\|(H-\lambda)u_n\|+\|u_n\|).
\end{equation}  
From the equality
\[
(H-\lambda)(1-\chi_m)u_n=(1-\chi_m)(H-\lambda)u_n-[H,\chi_m]u_n,
\]
we infer that
\[
\|(H-\lambda)(1-\chi_m)u_n\|\leq C(\|(H-\lambda)u_n\|+\frac 1m\|u_n\|).
\]
On the other hand, since $1-\chi_m$ is supported in $X\setminus K$, by assumption, we have 
\[
 \|(H-\lambda)(1-\chi_m)u_n\|\geq \delta\|(1-\chi_m)u_n\|. 
 \]
By \eqref{e:weyl}, we conclude that for any $\epsilon>0$ there exist $m$ and $N$ such that for any $n>N$, 
\begin{equation}\label{e:eps}
 \|(1-\chi_{m})u_n\|<\epsilon. 
\end{equation}  
 
Since
\[
(H-\lambda)(\chi_m u_n)=\chi_m (H-\lambda)u_n+[H,\chi_m]u_n,
\]
by \eqref{e:weyl} and \eqref{e:Hchim}, the sequence $(H-\lambda)(\chi_m u_n)_{n\in\mathbb N}$ is bounded in $L^2(X,\mathcal E)$,

By ellipticity of $H$, it follows that the sequence $(\chi_m u_n)_{n\in\mathbb N}$ is bounded in $H^2(D,\mathcal E)$, where $D\subset X$ is a regular bounded domain, which contains the support of $\chi_m$. Passing to a subsequence, we may assume that $(\chi_m u_n)_{n\in\mathbb N}$ converges to some $v\in L^2(X,\mathcal E)$. 
By \eqref{e:eps}, for any $n>N$, $\|\chi_{m}u_n\|\geq \|u_n\|- \|(1-\chi_{m})u_n\|>1-\epsilon$.  and, therefore, $\|v\|\geq 1-\epsilon$. 
 
 On the other hand, we have 
 \begin{align*}
 \|v\|^2=& \lim_{n\to \infty}\langle \chi_m u_n, \chi_m u_{n+1}\rangle \\ 
 = & \lim_{n\to \infty}\langle u_n-(1-\chi_m)u_n, u_{n+1}-(1-\chi_m) u_{n+1}\rangle \\
 = & \lim_{n\to \infty}(-\langle (1-\chi_m)u_n, u_{n+1}\rangle-\langle u_n, (1-\chi_m)u_{n+1}\rangle \\ & +\langle (1-\chi_m)u_n, (1-\chi_m) u_{n+1}\rangle) <2\epsilon +\epsilon^2. 
 \end{align*}
 We get a contradiction if we choose $\epsilon>0$ small enough. 
 \end{proof}

The rest of this section is devoted to the proof of Proposition \ref{p:estimate}. We will use some constructions introduced in \cite{Ko22}, and, therefore, we will briefly remind them, referring the interested reader to \cite{Ko22} for more details. 
 
\subsection{Approximation by the model operator}\label{s:sppr}
We construct an approximation of the operator $H_p$ by the model operator $\mathcal H^{(x_0)}_{p}$ in a sufficiently small neighborhood of an arbitrary point $x_0$. 

First, we consider some special coordinates near $x_0$. We choose an orthonormal base $\{e_j : j=1,\ldots,2n\}$ in $T_{x_0}X$ such that  
\begin{equation}\label{e:obase}
B_{x_0}e_{2k-1}=a_k(x_0)e_{2k}, \quad B_{x_0}e_{2k}=-a_k(x_0)e_{2k-1},\quad k=1,\ldots,n. 
\end{equation}
Then, for any $x_0\in X$, there exists a coordinate chart $\varkappa_{x_0} : B(0,c)\subset \RR^{2n}\stackrel{\cong}{\to} U_{x_0}=\varkappa_{x_0}(B(0,c))\subset X$ defined on the ball $B(0,c)$ of radius $c$ centered in the origin in $\RR^{2n}$ with some $c>0$, independent of $x_0$, such that 
\begin{equation}\label{e:x0}
\varkappa_{x_0}(0)=x_0,\quad (D\varkappa_{x_0})_0 (e_j)=e_j,\quad j=1,\ldots, 2n,
\end{equation} 
and $\varkappa_{x_0}^*\mathbf B$ is a constant 2-form on $B(0,c)$ given by
\begin{equation}\label{e:kappaBx0}
(\varkappa_{x_0}^*\mathbf B)_Z=\sum_{k=1}^n a_k(x_0) dZ_{2k-1}\wedge dZ_{2k}\quad Z\in B(0,c).
\end{equation}
Here, by abuse of notation, we use the same notation $\{e_j : j=1,\ldots,2n\}$ for the standard base in $\RR^{2n}$.

Moreover, for every $k\geq 0$, there exists $C_k>0$ such that, for any two charts $\varkappa_{x_\alpha} : B(0,c)\subset \RR^{2n} \stackrel{\cong}{\to} U_{x_\alpha}\subset X$ and $\varkappa_{x_\beta} : B(0,c)\subset \RR^{2n}\stackrel{\cong}{\to} U_{x_\beta}\subset X$ with $U_\alpha\cap U_\beta\neq\emptyset$, the map $\varkappa^{-1}_{x_\alpha} \circ \varkappa_{x_\beta} : \varkappa^{-1}_{x_\beta}(U_{x_\alpha}\cap U_{x_\beta})\subset \RR^{2n}\to  \RR^{2n}$ satisfies the following condition: for any multiindex $a$ with $|a|\leq k$
\begin{equation}\label{e:kp}
\|\partial^a(\varkappa^{-1}_{x_\alpha} \circ \varkappa_{x_\beta})(x)\|\leq C_k, \quad x\in  \varkappa^{-1}_{x_\beta}(U_{x_\alpha}\cap U_{x_\beta})\subset \RR^{2n}.
\end{equation}

The construction of $\varkappa_{x_0}$ is essentially the proof of the Darboux Lemma based on the well-known Moser argument. We refer the reader to \cite[Appendix]{Ko22} for more details.

It is easy to see that there exists a trivialization of the Hermitian line bundle $L$ over $U_{x_0}$:
\[
\tau^L_{x_0} : U_{x_0}\times \CC \stackrel{\cong}{\to}L\left|_{U_{x_0}}\right.,
\]
such that the connection one-form of $\nabla^L$ in this trivialization coincides with 
the one-form $\theta^{(x_0)}$ given by \eqref{e:Aflat}. We also assume that there exists a trivialization of the Hermitian bundle $E$ over $U_{x_0}$:
\[
\tau^E_{x_0} : U_{x_0}\times E_{x_0} \stackrel{\cong}{\to}E\left|_{U_{x_0}}\right.,
\]
These trivializations induce a trivialization of  $L^p\otimes E$ over $U_{x_0}$:
\[
\tau_{{x_0},p}=(\tau^L_{x_0})^p\otimes \tau^E_{x_0} : U_{x_0}\times E_{x_0} \stackrel{\cong}{\to}L^p\otimes E\left|_{U_{x_0}}\right..
\]
For any $x\in U_{x_0}$, we will write $\tau_{x_0,p}(x) : E_{x_0}\to L^p_x\otimes E_x$ for the associated linear map in the fibers. 

Let $g_{x_0}=\varkappa^*_{x_0} g$ be the Riemannian metric on $B(0,c)$ induced by the Riemannian metric $g$ on $X$. 
We introduce a map
\[
T^*_{{x_0},p} : C^\infty(X, L^p\otimes E)\to C^\infty(B(0,c), E_{x_0}),
\]
defined for $u\in C^\infty(X, L^p\otimes E)$ by 
\begin{equation}\label{e:defT}
T^*_{{x_0},p} u(Z)=|g_{x_0}(Z)|^{1/4}\tau_{{x_0},p}^{-1}(\varkappa_{x_0}(Z))[u(\varkappa_{x_0}(Z))],\quad  Z\in B(0,c).
\end{equation}

Consider the differential operator $H_p^{(x_0)}=T^*_{{x_0},p} \circ H_{p}\circ (T^*_{{x_0},p})^{-1}$ acting on $C^\infty(B(0,c), E_{x_0})$. It can be written as
\[
H_p^{(x_0)}=|g_{x_0}(Z)|^{1/4} \tau^*_{{x_0},p} \circ H_{p}\circ (\tau^*_{{x_0},p})^{-1} |g_{x_0}(Z)|^{-1/4}.
\]
Using the standard formula for the Bochner Laplacian in local coordinates, one can write
\begin{multline}\label{e:TDeltaT}
\tau^*_{{x_0},p} \circ H_{p}\circ (\tau^*_{{x_0},p})^{-1}\\ =-\frac 1p \sum_{\ell,m=1}^{2n}g_{x_0}^{\ell m}\nabla^{L^p\otimes E}_{e_\ell}\nabla^{L^p\otimes E}_{e_m}+\frac 1p \sum_{\ell=1}^{2n} \Gamma^\ell \nabla^{L^p\otimes E}_{e_\ell}+V_{x_0},
\end{multline}
where $\{e_j\}$ is the standard base in $\RR^{2n}$, $g_{x_0}^{\ell m}$ is the inverse of the matrix of $g_{x_0}$, $V_{x_0}=\tau^{E*}_{x_0} \circ V\circ (\tau^{E*}_{x_0})^{-1}\in C^\infty(B(0,c), \operatorname{End}(E_{x_0}))$ and $\Gamma^\ell\in C^\infty(B(0,c))$, $\ell=1,\ldots,2n,$ are some functions. If we denote by $\Gamma^E\in C^\infty(T(B(0,c)), \operatorname{End}(E_{x_0}))$ the connection one-form for the connection $\nabla^E$, we can write
\[
\nabla^{L^p\otimes E}_{v}=\nabla^{(x_0)}_{p,v}+\Gamma^E(v), \quad v\in T(B(0,c))=B(0,c)\times \RR^{2n}.
\]
where the connection $\nabla^{(x_0)}_{p}$ is given by \eqref{e:nablaL0}.

Then we have 
\[
|g_{x_0}|^{1/4} \nabla^{L^p\otimes E}_{v}|g_{x_0}|^{-1/4}=\nabla^{(x_0)}_{p,v}+\Gamma^E(v)-\frac 14v(\ln |g_{x_0}|).
\]
It follows that 
\begin{equation}\label{e:TDeltaT-D}
H_p^{(x_0)}=-\frac 1p \sum_{\ell,m=1}^{2n}g_{x_0}^{\ell m}\nabla^{(x_0)}_{p,e_\ell}\nabla^{(x_0)}_{p,e_m}+\frac 1p \sum_{\ell=1}^{2n} F_{\ell,{x_0}} \nabla^{(x_0)}_{p,e_\ell}+V_{x_0}+\frac 1pG_{x_0}
\end{equation}
with some $F_{\ell,{x_0}}, G_{x_0}\in C^\infty(B(0,c), \operatorname{End}(E_{x_0}))$, uniformly bounded on $x_0$. 

By \eqref{e:TDeltaT-D}, it follows that 
\begin{multline}\label{e:TDeltaT-D1}
H_p^{(x_0)}-\mathcal H^{(x_0)}_p=-\frac 1p \sum_{\ell,m=1}^{2n}(g_{x_0}^{\ell m}-\delta^{\ell m})\nabla^{(x_0)}_{p,e_\ell}\nabla^{(x_0)}_{p,e_m}\\ +\frac 1p \sum_{\ell=1}^{2n} F_{\ell,{x_0}} \nabla^{(x_0)}_{p,e_\ell}+V_{x_0}-V_{x_0}(0)+\frac 1pG_{x_0}. 
\end{multline}
By \eqref{e:x0}, we have $g_{x_0}^{\ell m}(Z)=\delta^{\ell m}, \ell,m=1,\ldots,2n$. 

\subsection{Norm estimates}
We recall some norm estimates for the model operator proved in \cite[Section 2.3]{Ko22}. We will denote by $\|\cdot\|$ the $L^2$-norm in $C^\infty_c(T_{x_0}X, E_{x_0})$. We recall that $\nabla^{(x_0)}_{p}$ stands for the connection on the trivial line bundle $L_0^p\otimes E_0$ given by \eqref{e:nablaL0} and $\Delta^{(x_0)}_{p}$ for the Bochner Laplacian on $C^\infty_c(T_{x_0}X, L_0^p\otimes E_0)\cong C^\infty_c(T_{x_0}X, E_{x_0})$ associated with this connection. 

%

Denote by $R^{(x_0)}_{p}(\lambda):=\left(\mathcal H^{(x_0)}_{p}-\lambda\right)^{-1}$, $\lambda\not\in \Sigma_{x_0}$, the resolvent of the operator $\mathcal H^{(x_0)}_{p}$. It satisfies the estimate
\begin{equation}\label{e:res1-s1}
\left\|R^{(x_0)}_{p}(\lambda)\right\|\leq d(\lambda,\Sigma_{x_0})^{-1},\quad \lambda\not\in \Sigma_{x_0},
\end{equation}
where $\|\cdot\|$ denotes the operator norm for the $L^2$-norms and $d(\lambda,\Sigma_{x_0})$ denotes the distance from $\lambda$ to $\Sigma_{x_0}$. 
Moreover, for any $K>0$, there exist $C_1>0$ and $C_2>0$ such that for any $\lambda\not\in \Sigma_{x_0}, |\lambda|<K$, 
\begin{equation*}
\left\|\frac{1}{\sqrt{p}}\nabla^{(x_0)}_pR^{(x_0)}_{p}(\lambda)\right\|\leq C_1d(\lambda,\Sigma_{x_0})^{-1},  
\end{equation*}
\begin{equation*}
\sum_{k,\ell=1}^{2n} \left\|\frac 1p\nabla^{(x_0)}_{p,e_k}\nabla^{(x_0)}_{p,e_\ell}R^{(x_0)}_{p}(\lambda) \right\| \leq C_2d(\lambda,\Sigma_{x_0})^{-1},
\end{equation*}
where $\{e_j : j=1,\ldots,2n\}$ stands for the fixed orthonormal base in $T_{x_0}X$.

As consequences, we have for any $u\in C^\infty_c(T_{x_0}X, E_{x_0})$
\begin{equation}\label{e:0H}
\left\|u\right\|\leq d(\lambda,\Sigma_{x_0})^{-1}\left\|\left(\mathcal H^{(x_0)}_{p}-\lambda\right)u\right\|,\quad 
\\ \lambda\not\in \Sigma_{x_0}.
\end{equation}

\begin{multline}\label{e:1H}
\left\|\frac{1}{\sqrt{p}}\nabla^{(x_0)}_pu\right\| \\ \leq C_1d(\lambda,\Sigma_{x_0})^{-1}\left\|\left(\mathcal H^{(x_0)}_{p}-\lambda\right)u\right\|,\quad \lambda\not\in \Sigma_{x_0}, |\lambda|<K.
\end{multline}

\begin{multline}\label{e:2H}
\sum_{k,\ell=1}^{2n} \left\|\frac 1p\nabla^{(x_0)}_{p,e_k}\nabla^{(x_0)}_{p,e_\ell}u \right\|\\ \leq C_2d(\lambda,\Sigma_{x_0})^{-1}\left\|\left(\mathcal H^{(x_0)}_{p}-\lambda\right)u\right\|,\quad \lambda\not\in \Sigma_{x_0}, |\lambda|<K.
\end{multline} 
 
 \subsection{Special covers}
Finally, we need some special covers by coordinates charts. 
For each $p\in \NN$, we consider the restrictions of the coordinates charts $\varkappa_{x_0}$ to the ball $B(0,p^{-1/4})$. One can choose an at most countable collection of coordinate charts 
\[
\varkappa_{\alpha,p}:=\varkappa_{x_{\alpha,p}}\left|_{B(0,p^{-1/4})}\right. : B(0,p^{-1/4}) \to U_{\alpha,p} := \varkappa_{\alpha,p}(B(0,p^{-1/4}))\subset X,  
\] 
with $1\leq \alpha\leq I_p$, $I_p\in \NN\cup \{\infty\}$, which cover $\Omega$ and for the cardinality of the set $\mathcal I_{p,\alpha}= \{1\leq \beta \leq I_p : U_{\alpha,p} \cap U_{\beta,p} \neq \varnothing \}$, we have 
\[
\# \mathcal I_{p,\alpha} \leq K_0, \quad 1\leq \alpha\leq I_p,
\]
with the constant $K_0$ independent of $p$.  For simplicity of notation, we will often omit $p$, writing $\varkappa_{\alpha}$, $U_\alpha$ etc. 

Choose a family of smooth functions $\{\varphi_{\alpha}=\varphi_{\alpha,p} : \RR^{2n}\to [0; 1], 1\leq \alpha\leq I_p\}$ supported on the ball $B(0,p^{-1/4})$, which gives a partition of unity on $X$ subordinate to $\{U_\alpha\}$:
\[
\sum_{\alpha=1}^{I_p}(\varphi_\alpha \circ \varkappa^{-1}_\alpha)^2 \equiv 1\ \text{on}\ \Omega,
\]
and satisfies the condition: for any $\gamma\in \ZZ^{2n}_+$, there exists $C_\gamma>0$ such that
\[
|\partial^\gamma\varphi_\alpha(Z)|<C_\gamma p^{(1/4)|\gamma|}, \quad Z\in \RR^{2n}, \quad 1\leq \alpha\leq I_p.
\] 

For every $1\leq \alpha\leq I_p$, we denote by $g_\alpha$ the induced Riemannian metric $g_{x_\alpha}$ on $B(0,p^{-1/4})$. We will use notation
\[
T^*_{\alpha}=T^*_{\alpha,p}: C^\infty(X, L^p\otimes E)\to C^\infty(B(0,p^{-1/4}), E_{x_\alpha})
\]
for the composition of the map $T^*_{x_\alpha,p}$ defined by \eqref{e:defT} with the restriction map  $C^\infty(B(0,c), E_{x_\alpha})\to  C^\infty(B(0,p^{-1/4}), E_{x_\alpha})$.

We have
\begin{equation}\label{e:Tap-unitary}
\|T^*_{\alpha}u\|^2_{L^2(B(0,p^{-1/4}), E_{x_\alpha})}=\|u\|^2_{L^2(U_\alpha,L^p\otimes E)}.
\end{equation}

 \subsection{Proof of Proposition \ref{p:estimate}}
Let $[a,b]\subset \mathbb R$ and $\Omega\subset X$ be an open domain such that for any $x\in \Omega$, $\Sigma_x\cap [a,b]=\emptyset$. 
  Let $u\in C^\infty(X, L^p\otimes E)$ be compactly supported in $\Omega$ and $\lambda\in (a,b)$. We apply the standard localization formula
\begin{equation}\label{e:local}
\|(H_p-\lambda)u\|^2
=\sum_{\alpha=1}^{I_p}\big(\|(H_p-\lambda)[(\varphi_\alpha \circ \varkappa^{-1}_\alpha)u]\|^2 -\|[H_p,\varphi_\alpha \circ \varkappa^{-1}_\alpha]u\|^2\big).
\end{equation}

Since $u$ is compactly supported in $\Omega$, we may assume that $\alpha$ belongs to the set $\mathcal I_{p,\Omega}= \{ \alpha\in I_p : U_{\alpha,p}\cap \Omega \neq \emptyset\}$. Generally speaking, for $\alpha\in \mathcal I_{p,\Omega}$, $x_\alpha$ may not belong to $\Omega$. We only know that $d(x_\alpha, \Omega)<p^{-1/4}$. Therefore, there exists $\delta>0$ such that for any $\alpha\in \mathcal I_{p,\Omega}$, we have 
\begin{equation}\label{e:delta}
\Sigma_{x_\alpha}\cap [a-\delta p^{-1/4},b+\delta p^{-1/4}]=\emptyset.
\end{equation}

For the first term in the right-hand side of \eqref{e:local}, we get
\[
\|(H_p-\lambda)[(\varphi_\alpha \circ \varkappa^{-1}_\alpha)u]\|^2=\|(H_p^{(x_\alpha)}-\lambda)[\varphi_\alpha T^*_{\alpha}u]\|^2.
\]
Since $\varphi_\alpha$ is supported on the ball $B(0,p^{-1/4})$, we have 
\[
|g^{\ell m}_\alpha(Z)-\delta^{\ell m}|\leq Cp^{-1/4}, \quad |V_\alpha(Z)-V_\alpha(0)|\leq Cp^{-1/4}, 
\] 
on the support of $\varphi_\alpha$ and therefore from \eqref{e:TDeltaT-D1} we get
\begin{multline*}
\left\|(H_p^{(x_\alpha)}-\mathcal H^{(x_\alpha)}_{p})\varphi_\alpha T^*_{\alpha}u\right\|_{L^2(\RR^{2n}, E_{x_\alpha})}\\ 
\begin{aligned}
\leq & C_1p^{-1/4}\sum_{\ell,m=1}^{2n}\left\| \frac 1p\nabla^{(x_\alpha)}_{p,e_\ell}\nabla^{(x_\alpha)}_{p,e_m}\varphi_\alpha T^*_{\alpha}u\right\|_{L^2(\RR^{2n}, E_{x_\alpha})}\\ & +C_2p^{-1/2}\sum_{\ell=1}^{2n}\left\|\frac{1}{\sqrt{p}}\nabla^{(x_\alpha)}_{p,e_\ell} \varphi_\alpha T^*_{\alpha}u\right\|_{L^2(\RR^{2n}, E_{x_\alpha})}\\  &+C_3p^{-1/4}\left\|\varphi_\alpha T^*_{\alpha}u\right\|_{L^2(\RR^{2n}, E_{x_\alpha})}.
\end{aligned}
\end{multline*}

Using  \eqref{e:0H}, \eqref{e:1H} and \eqref{e:2H}, for $\lambda\in (a+\delta p^{-1/4},b-\delta p^{-1/4})$, we have 
\begin{multline*}
\left\|(H_p^{(x_\alpha)}-\mathcal H^{(x_\alpha)}_{p})\varphi_\alpha T^*_{\alpha}u\right\|_{L^2(\RR^{2n}, E_{x_\alpha})}\\ \leq Cp^{-1/4}d(\lambda,\Sigma_{x_\alpha})^{-1}\left\|\left(\mathcal H^{(x_\alpha)}_{p}-\lambda\right)\varphi_\alpha T^*_{\alpha}u\right\|_{L^2(\RR^{2n}, E_{x_\alpha})}. 
\end{multline*}

Using the last estimate and \eqref{e:0H}, we infer that
\begin{multline*}
\|(H_p^{(x_\alpha)}-\lambda)[\varphi_\alpha T^*_{\alpha}u]\|_{L^2(\RR^{2n}, E_{x_\alpha})}\\ 
\begin{aligned}
\geq & \left\|(\mathcal H^{(x_\alpha)}_{p}-\lambda)[\varphi_\alpha T^*_{\alpha}u]\right\|_{L^2(\RR^{2n}, E_{x_\alpha})}\\ & - \left\|(H_p^{(x_\alpha)}-\mathcal H^{(x_\alpha)}_{p})\varphi_\alpha T^*_{\alpha}u\right\|_{L^2(\RR^{2n}, E_{x_\alpha})} \\ \geq & (1-Cp^{-1/4}d(\lambda,\Sigma_{x_\alpha})^{-1})
\|(\mathcal H^{(x_\alpha)}_{p}-\lambda)[\varphi_\alpha T^*_{\alpha}u]\|_{L^2(\RR^{2n}, E_{x_\alpha})} 
\\  \geq & (d(\lambda,\Sigma_{x_\alpha})-Cp^{-1/4})
\|\varphi_\alpha T^*_{\alpha}u\|_{L^2(\RR^{2n}, E_{x_\alpha})}.
\end{aligned}
\end{multline*}
Thus, for the first term in the right-hand side of \eqref{e:local}, we get
\begin{equation}\label{e:local1}
\|(H_p-\lambda)[(\varphi_\alpha \circ \varkappa^{-1}_\alpha)u]\| 
\geq (d(\lambda,\Sigma_{x_\alpha})-Cp^{-1/4}) \|(\varphi_\alpha \circ \varkappa^{-1}_\alpha)u\|.
\end{equation}

For the second term in the right-hand side of \eqref{e:local}, we write
\[
\|[H_p,\varphi_\alpha \circ \varkappa^{-1}_\alpha]u\|^2=\|[H_p^{(x_\alpha)},\varphi_\alpha]T^*_{\alpha}u\|^2
\]

Using \eqref{e:TDeltaT-D}, we compute the commutator $[H_p^{(x_\alpha)}, \varphi_\alpha]$:
\[
[H_p^{(x_\alpha)}, \varphi_\alpha] =-\frac 1p\sum_{\ell,m=1}^{2n}(2g^{\ell m}_\alpha e_\ell\varphi_\alpha\nabla^{(x_0)}_{p,e_m}+g^{\ell m}_\alpha e_\ell e_m\varphi_\alpha) +\frac 1p \sum_{\ell=1}^{2n} F_{\ell,\alpha} e_\ell\varphi_\alpha.
\]
Since $|\nabla \varphi_\alpha|<C p^{1/4}$, $|\nabla^2\varphi_\alpha|<Cp^{1/2}$, we get
\begin{align*}
\|[H_p^{(x_\alpha)},\varphi_\alpha]T^*_{\alpha}u\|_{L^2(\RR^{2n},E_{x_\alpha})} \leq & 
\frac{1}{p}\sum_{\ell,m=1}^{2n} \|{e_\ell}\varphi_\alpha\nabla^{(x_\alpha)}_{p,e_m}T^*_{\alpha}u\|_{L^2(B(0,p^{-1/4}),E_{x_\alpha})}\\ & +\frac{1}{\sqrt{p}} \|T^*_{\alpha}u\|_{L^2(B(0,p^{-1/4}),E_{x_\alpha})}.  
\end{align*}

Using that $(\nabla^{(x_\alpha)}_{p,e_m})^*=-\nabla^{(x_\alpha)}_{p,e_m}$ and $[\nabla^{(x_\alpha)}_{p,e_m}, ({e_\ell}\varphi_\alpha)^2]=2{e_\ell}\varphi_\alpha {e_\ell e_m}\varphi_\alpha$, we proceed as follows:
\begin{multline*}
\|{e_\ell}\varphi_\alpha\nabla^{(x_\alpha)}_{p,e_m}T^*_{\alpha}u\|^2_{L^2(B(0,p^{-1/4}),E_{x_\alpha})}\\
\begin{aligned}
= & ({e_\ell}\varphi_\alpha\nabla^{(x_\alpha)}_{p,e_m}T^*_{\alpha}u, {e_\ell}\varphi_\alpha\nabla^{(x_\alpha)}_{p,e_m}T^*_{\alpha}u)_{L^2(B(0,p^{-1/4}),E_{x_\alpha})}
\\
= & - (\nabla^{(x_\alpha)}_{p,e_m} ({e_\ell}\varphi_\alpha)^2 \nabla^{(x_\alpha)}_{p,e_m}T^*_{\alpha}u, T^*_{\alpha}u)_{L^2(B(0,p^{-1/4}),E_{x_\alpha})}
\\
= & - (({e_\ell}\varphi_\alpha)^2 (\nabla^{(x_\alpha)}_{p,e_m})^2T^*_{\alpha}u, T^*_{\alpha}u)_{L^2(B(0,p^{-1/4}),E_{x_\alpha})}
\\
& - 2({e_\ell}\varphi_\alpha {e_\ell e_m}\varphi_\alpha \nabla^{(x_\alpha)}_{p,e_m}T^*_{\alpha}u, T^*_{\alpha}u)_{L^2(B(0,p^{-1/4}),E_{x_\alpha})}
\end{aligned}
\end{multline*}

For the first term, using the fact that $|\nabla \varphi_\alpha|<C p^{1/4}$ and the equality
$$-\sum_{m=1}^{2n}(\nabla^{(x_\alpha)}_{p,e_m})^2=\Delta^{(x_\alpha)}=p(\mathcal H^{(x_\alpha)}_{p}-V(x_\alpha)),$$ we get 
\begin{multline*}
\sum_{\ell,m=1}^{2n}(({e_\ell}\varphi_\alpha)^2 (\nabla^{(x_\alpha)}_{p,e_m})^2T^*_{\alpha}u, T^*_{\alpha}u)_{L^2(B(0,p^{-1/4}),E_{x_\alpha})}
\\
\begin{aligned}
= & p \sum_{\ell=1}^{2n}(({e_\ell}\varphi_\alpha)^2 (\mathcal H^{(x_\alpha)}_{p}-V(x_\alpha))T^*_{\alpha}u, T^*_{\alpha}u)_{L^2(B(0,p^{-1/4}),E_{x_\alpha})}
\\
\leq & Cp^{3/2} (\|(\mathcal H^{(x_\alpha)}_{p}-\lambda)T^*_{\alpha}u\|^2_{L^2(B(0,p^{-1/4}),E_{x_\alpha})}+ \|T^*_{\alpha}u\|_{L^2(B(0,p^{-1/4}),E_{x_\alpha})}^2).
\end{aligned}
\end{multline*}
For the second term, using that $|\nabla \varphi_\alpha|<C p^{1/4}$, $|\nabla^2\varphi_\alpha|<Cp^{1/2}$, we get 
\begin{multline*}
\left|\sum_{\ell,m=1}^{2n} (2{e_\ell}\varphi_\alpha {e_\ell e_m}\varphi_\alpha \nabla^{(x_\alpha)}_{p,e_m}T^*_{\alpha}u, T^*_{\alpha}u)_{L^2(B(0,p^{-1/4}),E_{x_\alpha})}\right|
\\
\begin{aligned}
\leq & Cp^{1/2}\sum_{\ell,m=1}^{2n} \| {e_\ell}\varphi_\alpha \nabla^{(x_\alpha)}_{p,e_m}T^*_{\alpha}u\|_{L^2(B(0,p^{-1/4}),E_{x_\alpha})} \|T^*_{\alpha}u\|_{L^2(B(0,p^{-1/4}),E_{x_\alpha})}
\\
\leq & C(p^{-1/2}\sum_{\ell,m=1}^{2n} \|{e_\ell}\varphi_\alpha \nabla^{(x_\alpha)}_{p,e_m}T^*_{\alpha}u\|^2_{L^2(B(0,p^{-1/4}),E_{x_\alpha})}
\\
& +p^{3/2}\|T^*_{\alpha}u\|^2_{L^2(B(0,p^{-1/4}),E_{x_\alpha})}).
\end{aligned}
\end{multline*}
We conclude that 
\begin{multline*}
(1-Cp^{-1/2})\sum_{\ell,m=1}^{2n} \|{e_\ell}\varphi_\alpha \nabla^{(x_\alpha)}_{p,e_m}T^*_{\alpha}u\|^2_{L^2(B(0,p^{-1/4}),E_{x_\alpha})}\\
\leq Cp^{3/2} (\|(\mathcal H^{(x_\alpha)}_{p}-\lambda)T^*_{\alpha}u\|^2_{L^2(B(0,p^{-1/4}),E_{x_\alpha})}+ \|T^*_{\alpha}u\|_{L^2(B(0,p^{-1/4}),E_{x_\alpha})}^2),
\end{multline*}
and, if $p$ is large enough,
\begin{multline*}
\sum_{\ell,m=1}^{2n} \|{e_\ell}\varphi_\alpha \nabla^{(x_\alpha)}_{p,e_m}T^*_{\alpha}u\|_{L^2(B(0,p^{-1/4}),E_{x_\alpha})}\\
\leq Cp^{3/4} (\|(\mathcal H^{(x_\alpha)}_{p}-\lambda)T^*_{\alpha}u\|_{L^2(B(0,p^{-1/4}),E_{x_\alpha})}+ \|T^*_{\alpha}u\|_{L^2(B(0,p^{-1/4}),E_{x_\alpha})}).
\end{multline*}
It follows that
\begin{multline*} 
\|[H_p^{(x_\alpha)},\varphi_\alpha]T^*_{\alpha}u\|_{L^2(\RR^{2n},E_{x_\alpha})}\\ \leq Cp^{-1/4} (\|(\mathcal H^{(x_\alpha)}_{p}-\lambda)T^*_{\alpha}u\|_{L^2(B(0,p^{-1/4}),E_{x_\alpha})}+ \|T^*_{\alpha}u\|_{L^2(B(0,p^{-1/4}),E_{x_\alpha})}),
\end{multline*}
and 
\begin{multline}\label{e:local2}
\|[H_p,\varphi_\alpha \circ \varkappa^{-1}_\alpha]u\|_{L^2(\RR^{2n},E_{x_\alpha})}\\ \leq Cp^{-1/4} (\|(H_p-\lambda)[(\varphi_\alpha \circ \varkappa^{-1}_\alpha)u]\|+ \|(\varphi_\alpha \circ \varkappa^{-1}_\alpha)u\|).  
\end{multline}

Combining \eqref{e:local1} and \eqref{e:local2}, from \eqref{e:local}, assuming $d(\lambda,\Sigma)>Cp^{-1/4}$, we get 
\begin{align*}
\|(H_p-\lambda)u\|^2 =& \sum_{\alpha\in \mathcal I_{p,\Omega}}\big(\|(H_p-\lambda)[(\varphi_\alpha \circ \varkappa^{-1}_\alpha)u]\|^2 -\|[H_p,\varphi_\alpha \circ \varkappa^{-1}_\alpha]u\|^2\big)\\
\geq & \sum_{\alpha\in \mathcal I_{p,\Omega}}\big((1-C_1p^{-1/2})\|(H_p-\lambda)[(\varphi_\alpha \circ \varkappa^{-1}_\alpha)u]\|^2\\ & -C_2p^{-1/2}\|(\varphi_\alpha \circ \varkappa^{-1}_\alpha)u\|^2\big)\\
\geq & \sum_{\alpha\in \mathcal I_{p,\Omega}}\big((d(\lambda,\Sigma_{x_\alpha})-Cp^{-1/4})^2-C_3p^{-1/2})  \|(\varphi_\alpha \circ \varkappa^{-1}_\alpha)u\|^2\\
\geq & \big((d(\lambda,\Sigma)-Cp^{-1/4})^2-C_3p^{-1/2})  \|u\|^2.
\end{align*}

If $C_3\leq 0$, this completes the proof of Proposition \ref{p:estimate}. If $C_3>0$, we complete the proof as follows:
\begin{multline*}
\|(H_p-\lambda)u\|^2 \\
\begin{aligned}
\geq & \big(d(\lambda,\Sigma)-(C+C^{1/2}_3)p^{-1/4}\big)\big(d(\lambda,\Sigma)-(C-C^{1/2}_3)p^{-1/4}\big)  \|u\|^2\\
\geq & \big(d(\lambda,\Sigma)-(C+C^{1/2}_3)p^{-1/4}\big)^2  \|u\|^2
\end{aligned}
\end{multline*}
assuming $d(\lambda,\Sigma)>(C+C^{1/2}_3)p^{-1/4}$.

\section{Eigensection estimates}  \label{s:eigenest}

This section is devoted to the proof of Theorem \ref{t:eigenest}. The proof of Theorem \ref{t:eigenest} is obtained by a slight modification of the proof \cite[Proposition 2.1]{FLRV24}. Instead of resolvent estimates, we use norm estimates for the operator given by Proposition~\ref{p:estimate}. 

\subsection{Weight functions}\label{s:weighted-est}
We will use some weight functions. 
As shown in \cite[Proposition 4.1]{Kor91} (see also \cite[Section 3.1]{ko-ma-ma}), for any $p\in \mathbb N$, there exists a function $\Phi_p\in C^\infty(X)$, satisfying the following conditions:

(1) we have
\begin{equation}\label{(1.1)}
\vert \Phi_p(x) - d (x,\mathcal K_{[a,b]})\vert  < \frac{1}{\sqrt{p}}\;,
\quad x\in X, \quad p\in \NN;
\end{equation}

(2) for any $k>0$, there exists $C_k>0$ such that
\begin{equation}\label{e:chi-p}
\left(\frac{1}{\sqrt{p}}\right)^{k-1}\left|\nabla^k\Phi_p(x)\right|<C_k, \quad x\in X, \quad p\in \NN. 
\end{equation} 
 
Define a family of differential operators on $C^\infty(X,L^p\otimes E)$ by
\begin{align}\label{e:weight-operator}
H_{p,\tau}:= e^{\tau\sqrt{p} \Phi_p} H_p e^{-\tau\sqrt{p} \Phi_p},\quad p\in \NN, \quad \tau\in \RR.
\end{align}
An easy computation gives that
\begin{equation}\label{e:DpaW}
H_{p,\tau}=H_p+\frac{\tau}{\sqrt{p}} A_{p}+\tau^2B_{p},
\end{equation}
where 
\begin{equation} \label{e:ApBp}
A_{p}=-2d \Phi_p\cdot \nabla^{L^p\otimes E} +\Delta \Phi_p, \quad B_{p}=-|d \Phi_p|^2.
\end{equation}
Here, for $u \in C^\infty(X, L^p\otimes E)$, $d\Phi_p\cdot\nabla^{L^p\otimes E}u \in C^\infty(X, L^p\otimes E)$ stands for the pointwise inner product of $d\Phi_p\in C^\infty(X,T^*X)$ and $\nabla^{L^p\otimes E}u\in C^\infty(X,L^p\otimes E\otimes T^*X)$ determined by the Riemannian metric. 


\subsection{Proof of Theorem \ref{t:eigenest}} 
Suppose that $u_p\in C^\infty(X, L^p\otimes E)\cap L^2(X, L^p\otimes E)$ is such that 
\[
H_pu_p=\lambda_pu_p
\]
with some $p\in \mathbb N$ and $\lambda_p\in [a_1,b_1]\subset (a,b)$. 
Then, for $v_p=e^{\tau\sqrt{p} \Phi_p}u_p$, we have 
\begin{equation}\label{e:Hpa-lambda}
H_{p,\tau}v_p=\lambda_pv_p.
\end{equation}

Choose an arbitrary $a_2$ and $b_2$ such that $a_1>a_2>a$ and $b_1<b_2<b$. Let 
\[
\Omega_1=\Omega_{[a_2,b_2]}=\{x\in X : \Sigma_x\cap [a_2, b_2]=\emptyset\}.
\]
This is an open subset of $X$, which contains $\bar\Omega=\overline{\Omega_{[a,b]}}$. Moreover, since $B$ and $V$ are $C^\infty$-bounded, there exists $\epsilon>0$ such that  $\Omega_{1,2\epsilon}:=\{x\in \Omega_1 : d(x,\partial \Omega_1)>2\epsilon\}$ contains $\bar\Omega$.

Now let $\phi_p\in C^\infty_b(X)$ be supported in $\Omega_1$, $\phi_p\equiv 1$ on $\Omega_{1,\epsilon p^{-1/2}}$ and
\[
|\nabla\phi_p|<C_1p^{1/2}, \quad |\nabla^2\phi_p|<C_2p.
\] 
Then
\[
H_{p,\tau}(\phi_p v_p)=\lambda_p\phi_p v_p+[H_{p,\tau}, \phi_p] v_p.
\]
By Proposition~\ref{p:estimate}, 
there exist $C_0>0$ and $p_0\in\mathbb N$, such that for any $p>p_0$, we have 
\begin{equation}\label{e:lower-est}
\|(H_{p}-\lambda_p)\phi_p v_p\|\geq  C_0\|\phi_p v_p\|.
\end{equation}

\begin{lemma}\label{l:lower-est-a}
For any $\tau>0$ small enough,  there exists $C_0>0$, such that for any $p>p_0$, we have 
\begin{equation}\label{e:lower-est-a}
\|(H_{p,\tau}-\lambda_p)\phi_p v_p\|\geq  C_0\|\phi_p v_p\|.
\end{equation}
\end{lemma}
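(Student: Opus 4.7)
The plan is to treat $H_{p,\tau}$ as an $O(\tau)$-perturbation of $H_p$ when tested against $w:=\phi_p v_p$, and then combine the resulting bound with \eqref{e:lower-est} via the triangle inequality. First, using \eqref{e:DpaW} and \eqref{e:ApBp}, I would write
\begin{equation*}
(H_{p,\tau}-H_p)w \;=\; \frac{\tau}{\sqrt{p}}\bigl(-2\,d\Phi_p\cdot \nabla^{L^p\otimes E}w + (\Delta \Phi_p)\,w\bigr) - \tau^{2}\,|d\Phi_p|^{2}\,w.
\end{equation*}
The pointwise derivative bounds \eqref{e:chi-p} give $|d\Phi_p|\leq C$ and $|\Delta\Phi_p|\leq C\sqrt{p}$ uniformly in $p$, so
\begin{equation*}
\|(H_{p,\tau}-H_p)w\| \;\leq\; \frac{C\tau}{\sqrt{p}}\,\|\nabla^{L^p\otimes E}w\| + C(\tau+\tau^{2})\|w\|.
\end{equation*}

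The second step would be to control $p^{-1/2}\|\nabla^{L^p\otimes E}w\|$ by $\|(H_p-\lambda_p)w\|$ and $\|w\|$. Since $\Delta^{L^p\otimes E} = (\nabla^{L^p\otimes E})^{\!*}\nabla^{L^p\otimes E}$ and $H_p=\tfrac{1}{p}\Delta^{L^p\otimes E}+V$ with $V$ self-adjoint and uniformly bounded, the Bochner identity gives
\begin{equation*}
\tfrac{1}{p}\|\nabla^{L^p\otimes E}w\|^{2} \;=\; \langle (H_p-V)w,\,w\rangle \;\leq\; \|H_pw\|\,\|w\| + \|V\|_{\infty}\|w\|^{2}.
\end{equation*}
Using $\|H_pw\|\leq \|(H_p-\lambda_p)w\|+|\lambda_p|\|w\|$ together with the boundedness of $\lambda_p\in[a_1,b_1]$ and the elementary inequality $ab\leq \tfrac12(a^{2}+b^{2})$, this yields
\begin{equation*}
\tfrac{1}{\sqrt{p}}\,\|\nabla^{L^p\otimes E}w\| \;\leq\; C\bigl(\|(H_p-\lambda_p)w\|+\|w\|\bigr).
\end{equation*}
Plugging back into the first step,
\begin{equation*}
\|(H_{p,\tau}-H_p)w\| \;\leq\; C\tau\,\|(H_p-\lambda_p)w\| + C\tau\,\|w\|.
\end{equation*}

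Finally, the triangle inequality combined with \eqref{e:lower-est} produces
\begin{equation*}
\|(H_{p,\tau}-\lambda_p)w\| \;\geq\; (1-C\tau)\|(H_p-\lambda_p)w\| - C\tau\|w\| \;\geq\; \bigl((1-C\tau)\,C_0 - C\tau\bigr)\|w\|,
\end{equation*}
and for $\tau>0$ sufficiently small (independently of $p>p_0$) the right-hand coefficient stays positive, giving the desired lower bound with a new constant, e.g.\ $C_0/2$.

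I expect the only delicate point to be the careful bookkeeping of powers of $\sqrt{p}$: $\Delta\Phi_p$ grows like $\sqrt{p}$ and $\|\nabla^{L^p\otimes E}w\|$ is naturally of order $\sqrt{p}\|w\|$, but the former is exactly absorbed by the explicit $\tau/\sqrt{p}$ prefactor in $H_{p,\tau}-H_p$ while the latter is tamed by the Bochner identity once we exploit that $w$ is essentially an eigensection of $H_p$. All remaining constants are uniform in $p$ thanks to the bounded-geometry hypotheses, the uniform estimates \eqref{e:chi-p} on $\nabla^{k}\Phi_p$, and $\lambda_p\in[a_1,b_1]$, so the smallness of $\tau$ needed does not deteriorate with $p$.
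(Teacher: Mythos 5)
Your proposal is correct and follows essentially the same route as the paper: express $H_{p,\tau}-H_p$ via \eqref{e:DpaW}--\eqref{e:ApBp}, bound the gradient term through the Bochner identity $\frac1p\|\nabla^{L^p\otimes E}w\|^2 = \langle(H_p-V)w,w\rangle$, and combine with \eqref{e:lower-est} by the triangle inequality. The paper introduces a free Young parameter $\epsilon$ and later tunes it against $\tau$, whereas you use the fixed inequality $ab\leq\tfrac12(a^2+b^2)$, arriving at the coefficient $(1-C\tau)C_0 - C\tau$ directly; this is a slightly more streamlined bookkeeping but is not a different argument.
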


\begin{proof}
We can write
\begin{equation}\label{e:Hpa-lp}
\|(H_{p,\tau}-\lambda_p)\phi_p v_p\|\geq \|(H_{p}-\lambda_p)\phi_p v_p\|-\|(H_{p,\tau}-H_p)\phi_p v_p\|.
\end{equation}
By \eqref{e:DpaW} and \eqref{e:ApBp}, we have
\[
H_{p;\tau}-H_p=\frac{\tau}{\sqrt{p}} (-2d \Phi_p\cdot \nabla^{L^p\otimes E} +\Delta \Phi_p)-\tau^2|d \Phi_p|^2.
\]
Using \eqref{e:chi-p}, we get 
\begin{equation}\label{e:Hpa-Hp}
\|(H_{p;\tau}-H_p)\phi_p v_p \|\leq C_1\frac{\tau}{\sqrt{p}}\|\nabla^{L^p\otimes E}\phi_p v_p\| +C_2\tau\|\phi_p v_p \|+C_3\tau^2\|\phi_p v_p \|.
\end{equation}

To estimate the first term in the right hand side of \eqref{e:Hpa-Hp}, we proceed as follows:
\begin{align*}
\|\nabla^{L^p\otimes E}\phi_p v_p\|^2=& ((\nabla^{L^p\otimes E})^*\nabla^{L^p\otimes E}v_p, v_p)=(p(H_p-V)\phi_p v_p, \phi_p v_p)
\\
= & (p(H_p-\lambda_p)\phi_p v_p, \phi_p v_p)+p((\lambda_p-V)\phi_p v_p, \phi_p v_p)
\\
\leq & p\|(H_p-\lambda_p)\phi_p v_p\|\|\phi_p v_p\|+Cp\|\phi_p v_p\|^2
\\
\leq & \epsilon^2 p\|(H_p-\lambda_p)\phi_p v_p\|^2+(C+\epsilon^{-2}) p\|\phi_p v_p\|^2
\end{align*}
with an arbitrary $\epsilon>0$ to be chosen later, which gives an estimate
\[
\frac{1}{\sqrt{p}}\|\nabla^{L^p\otimes E}\phi_p v_p\|\leq \epsilon \|(H_p-\lambda_p)\phi_p v_p\|+(C+\epsilon^{-1}) \|\phi_p v_p\|. 
\]
Using this estimate, from \eqref{e:Hpa-Hp}, we get
\begin{multline*}
\|(H_{p;\tau}-H_p)\phi_p v_p \|\\ \leq C_1\tau\epsilon \|(H_p-\lambda_p)\phi_p v_p\|+(C_2+C_3\epsilon^{-1})\tau\|\phi_p v_p\| +C_4\tau^2\|\phi_p v_p \|.
\end{multline*}
We choose $\epsilon$ such that $C_1\tau\epsilon=\frac 12$:
\[
\|(H_{p;\tau}-H_p)\phi_p v_p \|\leq \frac 12 \|(H_p-\lambda_p)\phi_p v_p\|+C_5\tau\|\phi_p v_p \|+C_6\tau^2\|\phi_p v_p \|.
\]
Using \eqref{e:lower-est}, from this estimate and \eqref{e:Hpa-lp}, we get
\begin{multline*}
\|(H_{p,\tau}-\lambda_p)\phi_p v_p\|\geq \frac 12 \|(H_p-\lambda_p)\phi_p v_p\|-C_5\tau\|\phi_p v_p \|-C_6\tau^2\|\phi_p v_p \| \\
\geq \frac 12 C_0\|\phi_p v_p\|-C_5\tau\|\phi_p v_p \|-C_6\tau^2\|\phi_p v_p \|.
\end{multline*}
Taking $\tau$ small enough, we complete the proof.
\end{proof}

Let us fix $\tau$ as in Lemma \ref{l:lower-est-a}. By \eqref{e:Hpa-lambda}, we have
\begin{equation}\label{e:H=comm}
(H_{p,\tau}-\lambda_p)\phi_p v_p=[H_{p,\tau},\phi_p]v_p.
\end{equation}
By \eqref{e:DpaW} and \eqref{e:ApBp}, we compute
\begin{equation}\label{e:commHp-phi}
[H_{p,\tau}, \phi_p]=\frac 1p(-2d\phi_p\cdot\nabla^{L^p\otimes E} +\Delta\phi_p)-\frac{2\tau}{\sqrt{p}}d \Phi_p\cdot d \phi_p.
\end{equation}

Since  $d\phi_p$ and $\Delta\phi_p$ are supported in $\Omega_1\setminus \overline{\Omega_{1,\epsilon p^{-1/2}}}$, we have
\begin{equation}\label{e:est-psi}
\|d \phi_pv_p\|\leq Cp^{1/2}\|\psi_p v_p\|, \quad \|\Delta\phi_p\, v_p\|\leq Cp\| \psi_p v_p\|,
\end{equation}
where $\psi_p \in C^\infty_b(X)$ is supported in $X \setminus \overline{\Omega_{1,2\epsilon p^{-1/2}}}$ and $\psi_p\equiv 1$ on $X\setminus \Omega_{1,\epsilon p^{-1/2}}$, in particular on ${\rm supp}\,d\phi_p\subset \Omega_1 \setminus \overline{\Omega_{1, \epsilon p^{-1/2}}}$.

Therefore, by \eqref{e:commHp-phi}  and \eqref{e:est-psi}, we get 
\begin{equation}\label{e:commHp-phi-est}
\|[H_{p,\tau},\phi_p]v_p\|\leq\frac 2p\|d\phi_p\cdot \nabla^{L^p\otimes E} v_p\|+C_2\|\psi_p v_p\|.
\end{equation}

Now we need an estimate for $\|d\phi_p\cdot\nabla^{L^p\otimes E}v_p\|$, which is given by the following lemma.  

\begin{lem}\label{l:nabla-est}
We have 
\[
\|d\phi_p\cdot\nabla^{L^p\otimes E}v_p\|\leq Cp\|\psi_p v_p\|. 
\]
\end{lem}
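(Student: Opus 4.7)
The plan is to reduce the lemma to a Caccioppoli-type inequality
\[
\|\psi_p^2\nabla^{L^p\otimes E}v_p\|^2 \leq Cp\,\|\psi_p v_p\|^2
\]
and then prove this inequality by integrating by parts against the eigenvalue equation $H_{p,\tau}v_p=\lambda_p v_p$. The reduction is cheap: since $\psi_p\equiv 1$ on $\operatorname{supp}(d\phi_p)\subset \Omega_1\setminus\overline{\Omega_{1,\epsilon p^{-1/2}}}$ and $|d\phi_p|\leq Cp^{1/2}$, the pointwise Cauchy--Schwarz inequality combined with the fact that $\psi_p^4\equiv 1$ on $\operatorname{supp}(d\phi_p)$ yields
\[
|d\phi_p\cdot\nabla^{L^p\otimes E}v_p|^2 \leq |d\phi_p|^2\,\psi_p^4\,|\nabla^{L^p\otimes E} v_p|^2 \leq Cp\,\psi_p^4\,|\nabla^{L^p\otimes E} v_p|^2,
\]
and integration gives $\|d\phi_p\cdot\nabla^{L^p\otimes E} v_p\|^2 \leq Cp\,\|\psi_p^2\nabla^{L^p\otimes E} v_p\|^2$, so the lemma follows from the Caccioppoli bound.

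To prove the Caccioppoli bound, I apply the identity $\nabla^*(f\eta) = f\nabla^*\eta - \iota_{\operatorname{grad}f}\eta$ with $f=\psi_p^4$ to obtain
\[
\|\psi_p^2\nabla^{L^p\otimes E}v_p\|^2 = (v_p,\,\psi_p^4\Delta^{L^p\otimes E}v_p) - 4(v_p,\,\psi_p^3\nabla^{L^p\otimes E}_{\operatorname{grad}\psi_p}v_p).
\]
From $H_{p,\tau}v_p=\lambda_p v_p$ and \eqref{e:DpaW}--\eqref{e:ApBp}, one has
\[
\Delta^{L^p\otimes E}v_p = p(\lambda_p-V)v_p - \sqrt{p}\,\tau A_p v_p - p\tau^2 B_p v_p,
\]
with $A_p = -2\,d\Phi_p\cdot\nabla^{L^p\otimes E}+\Delta\Phi_p$ and $B_p=-|d\Phi_p|^2$. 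I substitute this into the identity above and use the bounds $|V|,|d\Phi_p|,|B_p|=O(1)$, $|\Delta\Phi_p|=O(\sqrt{p})$ from \eqref{e:chi-p}, $|d\psi_p|=O(p^{1/2})$, together with $\psi_p^4\leq\psi_p^2$ on $\operatorname{supp}\psi_p$. Every term that is purely quadratic in $v_p$ is then controlled by $Cp\|\psi_p v_p\|^2$. The two terms containing one factor $\nabla^{L^p\otimes E}v_p$---the commutator term $(v_p,\psi_p^3\nabla^{L^p\otimes E}_{\operatorname{grad}\psi_p}v_p)$ and the first-order piece $\sqrt{p}\,\tau(v_p,\psi_p^4\,d\Phi_p\cdot\nabla^{L^p\otimes E}v_p)$ coming from $A_p$---are handled via Young's inequality, each producing a contribution of the shape $\varepsilon\|\psi_p^2\nabla^{L^p\otimes E}v_p\|^2 + C_\varepsilon p\|\psi_p v_p\|^2$; choosing $\varepsilon$ small enough to absorb the $\varepsilon$-terms on the left completes the bound.

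The main obstacle I anticipate is the bookkeeping of powers of $p$ in this last step; specifically, one must verify that the $\sqrt{p}\,\tau A_p$ contribution does not produce a term of order larger than $p\|\psi_p v_p\|^2$ after absorption. This works precisely because $|d\Phi_p|$ is uniformly bounded (independently of $p$): the $\sqrt p$ prefactor of $A_p$ matches the $\sqrt p$ extracted from Young's inequality applied to a product of the form $\sqrt{p}\,\tau\,\psi_p^2|v_p|\cdot\psi_p^2|\nabla^{L^p\otimes E}v_p|$, giving exactly the target order $p\|\psi_p v_p\|^2$; any worse dependence on $p$ would spoil the estimate.
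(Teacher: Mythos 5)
Your proof is correct, and it takes a genuinely different — and cleaner — route than the paper's.

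The paper proves the lemma by a chart-by-chart localization: it applies the localization formula over a uniformly finite atlas $\{U_\alpha\}$ with a $p$-independent partition of unity $\{\varphi_\alpha\}$, reduces to estimating $\|\nabla^{L^p\otimes E}[(e_j\phi_p)(\varphi_\alpha\circ\varkappa_\alpha^{-1})v_p]\|$ in each chart, and obtains that estimate by pairing the local expression of $H_p$ against $(e_j\phi_p)^2(\varphi_\alpha\circ\varkappa_\alpha^{-1})v_p$ and pushing the eigenvalue equation $H_{p,\tau}v_p=\lambda_p v_p$ through the commutators $[H_{p,\tau},\varphi_\alpha\circ\varkappa_\alpha^{-1}]$; it then sums over $\alpha$ using the finite overlap bound \eqref{e:Leb}. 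You instead prove the single global Caccioppoli inequality $\|\psi_p^2\nabla^{L^p\otimes E}v_p\|^2\leq Cp\|\psi_p v_p\|^2$ by one integration by parts with the weight $\psi_p^4$, substituting $\Delta^{L^p\otimes E}v_p = p(\lambda_p-V)v_p-\sqrt p\,\tau A_pv_p-p\tau^2B_pv_p$ from the conjugated eigenvalue equation, and absorbing the gradient terms by Young's inequality. The pointwise reduction via $\psi_p\equiv 1$ on $\operatorname{supp}(d\phi_p)$ and $|d\phi_p|=O(p^{1/2})$ is correct, the integration-by-parts identity $(\nabla^*(f\eta),u)=(f\nabla^*\eta,u)-(\iota_{\operatorname{grad}f}\eta,u)$ is standard, and the power counting works out exactly as you say: the offending $\sqrt p\,\tau\,d\Phi_p\cdot\nabla^{L^p\otimes E}$ term contributes $C\sqrt p\|\psi_p^2\nabla v_p\|\|\psi_p v_p\|$, which Young turns into $\varepsilon\|\psi_p^2\nabla v_p\|^2+C_\varepsilon p\|\psi_p v_p\|^2$, the same order as the $\operatorname{grad}\psi_p$ commutator term. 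Two hygiene points worth making explicit: (i) the integration by parts is legitimate because $\psi_p$ is compactly supported (here one uses compactness of $\mathcal K_{[a_2,b_2]}$, so that $X\setminus\overline{\Omega_{1,2\epsilon p^{-1/2}}}$ is bounded), since $v_p=e^{\tau\sqrt p\Phi_p}u_p$ need not lie in $L^2(X)$ a priori; (ii) the bound $|d\psi_p|=O(p^{1/2})$ should be recorded as part of the construction of $\psi_p$, since the paper itself only uses $\psi_p$ as a marker and never differentiates it. Your approach dispenses entirely with the coordinate charts, the partition of unity, the local frames $e_j^{(\alpha)}$, and the finite-overlap bookkeeping, at no cost in generality; the paper's route is heavier but keeps uniformity over the atlas explicit, which is more in the spirit of the bounded-geometry toolkit used throughout.
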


The proof of this lemma will be given in Section \ref{s:nabla-est}. First, we complete the proof of Theorem \ref{t:eigenest}.

By \eqref{e:commHp-phi-est} and Lemma~\ref{l:nabla-est}, we infer that 
\[
\|[H_{p,\tau}, \phi_p]v_p\|\leq C_1\|\psi_p v_p\|.
\]
From this estimate, taking into account \eqref{e:lower-est} and \eqref{e:commHp-phi}, we get 
\[
\|\phi_p v_p\|\leq C_1\|\psi_p v_p\|.
\]
Now we proceed as follows: 
\begin{multline*}
\int_{\Omega} e^{2\tau\sqrt{p} \Phi_p(x)}|u_p(x)|^2dx \leq
\int_{1,\Omega_{\epsilon p^{-1/2}}} e^{2\tau\sqrt{p} \Phi_p(x)}|u_p(x)|^2dx \\ =\|v_p\|^2_{L^2(\Omega_{1, \epsilon p^{-1/2}})} \leq \|\phi_p v_p\|^2 \leq C^2_1 \|\psi_p v_p\|^2 = C^2_1 \|\psi_p v_p\|^2_{L^2(X \setminus \overline{\Omega_{1,2\epsilon p^{-1/2}}})}\\ \leq C^2_1 \|v_p\|^2_{L^2(X \setminus \overline{\Omega_{1, 2\epsilon p^{-1/2}}})}=C^2_1 \int_{X \setminus \overline{\Omega_{1, 2\epsilon p^{-1/2}}}} e^{2\tau\sqrt{p} \Phi_p(x)}|u_p(x)|^2dx\leq C^2_1, 
\end{multline*}
since $\Phi_p=0$ on $X \setminus \overline{\Omega_{1,2\epsilon p^{-1/2}}}\subset X \setminus\Omega$, that completes the proof of Theorem \ref{t:eigenest}. 

\subsection{Proof of Lemma~\ref{l:nabla-est}}\label{s:nabla-est}
As in Section \ref{s:sppr}, we choose an at most countable collection of coordinate charts (with $p=1$) 
\[
\varkappa_{\alpha}:=\varkappa_{x_{\alpha}}\left|_{B(0,c)}\right. : B(0,c) \to U_{\alpha} := \varkappa_{\alpha}(B(0,c))\subset X,  
\] 
with $1\leq \alpha\leq I$, $I\in \NN\cup \{\infty\}$, which cover $\Omega$ and for the cardinality of the set $\mathcal I_{\alpha}= \{1\leq \beta \leq I : U_{\alpha} \cap U_{\beta} \neq \varnothing \}$, we have 
\begin{equation}\label{e:Leb}
\# \mathcal I_{\alpha} \leq K_0, \quad 1\leq \alpha\leq I.
\end{equation}

Choose a family of smooth functions $\{\varphi_{\alpha} : \RR^{2n}\to [0; 1], 1\leq \alpha\leq I\}$ supported on the ball $B(0,c)$, which gives a partition of unity subordinate to $\{U_\alpha\}$:
\[
\sum_{\alpha=1}^{I}(\varphi_\alpha \circ \varkappa^{-1}_\alpha)^2 \equiv 1\ \text{on}\ \Omega,
\]
and satisfies the condition: for any $\gamma\in \ZZ^{2n}_+$, there exists $C_\gamma>0$ such that
\begin{equation}\label{e:unif}
|\partial^\gamma\varphi_\alpha(Z)|<C_\gamma, \quad Z\in \RR^{2n}, \quad 1\leq \alpha\leq I.
\end{equation}

Recall the localization formula
\[
\|Pu\|^2=\sum_{\alpha=1}^{I} \|P[(\varphi_\alpha \circ \varkappa^{-1}_\alpha)u]\|^2-\sum_{\alpha=1}^{I} \|[P,\varphi_\alpha \circ \varkappa^{-1}_\alpha] u\|^2,
\]
which gives
\begin{equation}\label{e:local3}
\|d\phi_p\cdot\nabla^{L^p\otimes E}v_p\|^2 
\leq \sum_{\alpha=1}^{I} \|d\phi_p\cdot\nabla^{L^p\otimes E}[(\varphi_\alpha \circ \varkappa^{-1}_\alpha)v_p]\|^2.
\end{equation}

For any $\alpha$, $1\leq \alpha\leq I$, choose a local orthonormal frame $(e^{(\alpha)}_1,\ldots,e^{(\alpha)}_{2n})$ in $TX$ defined on $U_\alpha$, which is $C^\infty$-bounded in $\alpha$. For simplicity of notation, we denote $u=(\varphi_\alpha \circ \varkappa^{-1}_\alpha)v_p\in C^\infty_c(U_\alpha)$ and will omit $\alpha$, writing $e_j$ instead of $e^{(\alpha)}_j$. We get
\[
d\phi_p\cdot\nabla^{L^p\otimes E}u=\sum_{j=1}^{2n}e_j\phi_p \nabla_{e_j}^{L^p\otimes E}u=\sum_{j=1}^{2n}\nabla_{e_j}^{L^p\otimes E}[(e_j\phi_p)u]-\sum_{j=1}^{2n} (e_je_j\phi_p) u
\]
and, therefore, by \eqref{e:est-psi}, we have
\begin{equation}\label{e:dphi:est}
\begin{aligned}
\|d\phi_p\cdot\nabla^{L^p\otimes E}u\|^2\leq & C_1\sum_{j=1}^{2n}\|\nabla_{e_j}^{L^p\otimes E}[(e_j\phi_p)u]\|^2+C_2p^2\|\psi_p u\|^2\\
\leq & C_1\sum_{j,k=1}^{2n}\|\nabla_{e_k}^{L^p\otimes E}[(e_j\phi_p)u]\|^2+C_2p^2\|\psi_p u\|^2\\
= & C_1\sum_{j=1}^{2n}\|\nabla^{L^p\otimes E}[(e_j\phi_p)u]\|^2+C_2p^2\|\psi_p u\|^2. 
\end{aligned}
\end{equation}
By \eqref{e:unif}, all the constants here and below can be taken to be independent of $\alpha$.

Now we apply the  formula
\[
\|P(\chi u)\|^2=\Re (Pu, P\chi^2u)+\|[P,\chi]u\|^2.
\]
For any $j=1,2,\ldots,2n$, using the fact that $[\nabla_{e_k}^{L^p\otimes E}, e_j\phi_p]=e_ke_j\phi_p$ and \eqref{e:est-psi}, we get 
\begin{align*}
\|\nabla^{L^p\otimes E}[(e_j\phi_p)u]\|^2=& \sum_{k=1}^{2n}\|\nabla_{e_k}^{L^p\otimes E}[(e_j\phi_p)u]\|^2\\ = & \sum_{k=1}^{2n} \Re (\nabla_{e_k}^{L^p\otimes E}u, \nabla_{e_k}^{L^p\otimes E}[(e_j\phi_p)^2u])+ \|(e_ke_j\phi_p) u\|^2\\
\leq & \sum_{k=1}^{2n} \Re ((\nabla_{e_k}^{L^p\otimes E})^*\nabla_{e_k}^{L^p\otimes E}u, (e_j\phi_p)^2u)+C_3p^2\|\psi_p u\|^2.
\end{align*}

The operator $H_p$ can be written as 
\[
H_p=\frac 1p(\sum_{k=1}^{2n}(\nabla_{e_k}^{L^p\otimes E})^*\nabla_{e_k}^{L^p\otimes E}-\nabla_{\sum_{k=1}^{2n}\nabla_{e_k}e_k}^{L^p\otimes E} )+V.
\]
Therefore, we proceed as follows:
\begin{multline}\label{e:nabla-e_j}
\|\nabla^{L^p\otimes E}[(e_j\phi_p)u]\|^2\\ \leq  p \Re (H_p u, (e_j\phi_p)^2u)+C\Re (\nabla_{\sum_{k=1}^{2n}\nabla_{e_k}e_k}^{L^p\otimes E}u, (e_j\phi_p)^2u)+C_3p^2\|\psi_p u\|^2.
\end{multline}

By \eqref{e:DpaW} and \eqref{e:ApBp}, 
%
%
we have 
\begin{multline*}
(H_pu,(e_j\phi_p)^2 u)=
\\
=((H_{p,\tau}+\frac{\tau}{\sqrt{p}}(2d\Phi_p\cdot \nabla^{L^p\otimes E}-\Delta \Phi_p)+\tau^2|d\Phi_p|^2)u,(e_j\phi_p)^2u).
\end{multline*}
Therefore, for the first term in the right hand side of \eqref{e:nabla-e_j}, we get 
\begin{multline}\label{e:nabla-e_j1}
|\Re (H_p u, (e_j\phi_p)^2u)|\leq |(H_{p,\tau}u,(e_j\phi_p)^2u)|\\ + \frac{C_1}{\sqrt{p}}(d \Phi_p\cdot \nabla^{L^p\otimes E}u,(e_j\phi_p)^2u)+C_2p\|\psi_p u\|^2.
\end{multline} 

For the first term in the right hand side of \eqref{e:nabla-e_j1}, we recall that $u=(\varphi_\alpha \circ \varkappa^{-1}_\alpha)v_p$ and use \eqref{e:Hpa-lambda}:
\begin{multline}\label{e:nabla-e_j1a}
(H_{p,\tau}u,(e_j\phi_p)^2u)=(H_{p,\tau}(\varphi_\alpha \circ \varkappa^{-1}_\alpha)v_p,(e_j\phi_p)^2u)
\\
=\lambda_p \|(e_j\phi_p)u\|^2+([H_{p,\tau}, \varphi_\alpha \circ \varkappa^{-1}_\alpha]v_p,(e_j\phi_p)^2u)\\
\leq ([H_{p,\tau}, \varphi_\alpha \circ \varkappa^{-1}_\alpha]v_p,(e_j\phi_p)^2u)+Cp\|\psi_p u\|^2.
\end{multline} 
Using the formula for the commutator $[H_{p,\tau}, \varphi_\alpha \circ \varkappa^{-1}_\alpha]$ (cf. \eqref{e:commHp-phi}), we get
\begin{multline*}
([H_{p,\tau}, \varphi_\alpha \circ \varkappa^{-1}_\alpha]v_p,(e_j\phi_p)^2u)\\
\begin{aligned}
= & \frac 1p((-2d(\varphi_\alpha \circ \varkappa^{-1}_\alpha) \cdot\nabla^{L^p\otimes E} +\Delta(\varphi_\alpha \circ \varkappa^{-1}_\alpha))v_p,(e_j\phi_p)^2u)\\
& -\frac{2\tau}{\sqrt{p}}(d \Phi_p\cdot d(\varphi_\alpha \circ \varkappa^{-1}_\alpha)v_p,(e_j\phi_p)^2u).
\end{aligned}
\end{multline*} 
By \eqref{e:est-psi}, it follows that
\begin{multline}\label{e:nabla-e_j11}
|([H_{p,\tau}, \varphi_\alpha \circ \varkappa^{-1}_\alpha]v_p,(e_j\phi_p)^2u)| \\
\leq \frac 2p |(d(\varphi_\alpha \circ \varkappa^{-1}_\alpha) \cdot\nabla^{L^p\otimes E}v_p,(e_j\phi_p)^2u)|
+C\sqrt{p}\|\psi_p v_p\|_{L^2(U_\alpha)}\|\psi_p u\|.
\end{multline} 
Here $\psi_p v_p$ is not supported in $U_\alpha$, but, since $d(\varphi_\alpha \circ \varkappa^{-1}_\alpha)$ is supported in $U_\alpha$, we can put the integration over $U_\alpha$. 

For the first term in the right hand side of \eqref{e:nabla-e_j11}, we proceed as follows:  
\begin{multline*}
(d(\varphi_\alpha \circ \varkappa^{-1}_\alpha) \cdot\nabla^{L^p\otimes E}v_p,(e_j\phi_p)^2u)\\
\begin{aligned}
= &\sum_{k=1}^{2n}(e_k(\varphi_\alpha \circ \varkappa^{-1}_\alpha)\nabla_{e_k}^{L^p\otimes E}v_p,(e_j\phi_p)^2u)\\
=& \sum_{k=1}^{2n}(e_k(\varphi_\alpha \circ \varkappa^{-1}_\alpha)(e_j\phi_p)\nabla_{e_k}^{L^p\otimes E}v_p,(e_j\phi_p)u)\\
=& \sum_{k=1}^{2n}(e_k(\varphi_\alpha \circ \varkappa^{-1}_\alpha)(e_j\phi_p)v_p,(\nabla_{e_k}^{L^p\otimes E})^*[(e_j\phi_p)u])
\\
& -\sum_{k=1}^{2n}(e_k[(e_k(\varphi_\alpha \circ \varkappa^{-1}_\alpha)(e_j\phi_p)]v_p,(e_j\phi_p)u).
\end{aligned}
\end{multline*}
Now we use the fact that $(\nabla_{e_k}^{L^p\otimes E})^*=-\nabla_{e_k}^{L^p\otimes E}+c_{k,p}$, where $c_{k,p}$ is an endomorphism of $L^p\otimes E$ over $U_\alpha$ such that $|c_{k,p}|=\mathcal O(p)$ and conclude that
\begin{multline*}
|(d(\varphi_\alpha \circ \varkappa^{-1}_\alpha) \cdot\nabla^{L^p\otimes E}v_p,(e_j\phi_p)^2u)|\\ \leq  Cp^{1/2} \|\psi_p u\|\|\nabla^{L^p\otimes E}[(e_j\phi_p)u]) +C_1p^{2}\|\psi_p v_p\|_{L^2(U_\alpha)}\|\psi_p u\|. 
\end{multline*}
Plugging this estimate into \eqref{e:nabla-e_j11}, we get 
\begin{multline}\label{e:nabla-e_j11a}
|([H_{p,\tau}, \varphi_\alpha \circ \varkappa^{-1}_\alpha]v_p,(e_j\phi_p)^2u)|  \\
\leq \frac{C}{\sqrt{p}}\|\psi_p u\|\|\nabla^{L^p\otimes E}[(e_j\phi_p)u])+Cp\|\psi_p v_p\|_{L^2(U_\alpha)}\|\psi_p u\|.
\end{multline} 
Combining \eqref{e:nabla-e_j1a} and \eqref{e:nabla-e_j11a}, we get an estimate for the first term in the right hand side of \eqref{e:nabla-e_j1},
\begin{multline}\label{e:nabla-e_j1b}
|(H_{p,\tau}u,(e_j\phi_p)^2u)|\\
\leq \frac{C}{\sqrt{p}}\|\psi_p u\|\|\nabla^{L^p\otimes E}[(e_j\phi_p)u]\|+Cp\|\psi_p v_p\|_{L^2(U_\alpha)}\|\psi_p u\|.
\end{multline}  

For the second term in the right hand side of \eqref{e:nabla-e_j1}, we proceed as follows:
\begin{multline*} 
 (d\Phi_p\cdot \nabla^{L^p\otimes E}u,(e_j\phi_p)^2u)=((e_j\phi_p)\sum_{k=1}^{2n}{e_k}\Phi_p \nabla^{L^p\otimes E}_{e_k}u,(e_j\phi_p)u)\\
 =(\sum_{k=1}^{2n}{e_k} \Phi_p \nabla^{L^p\otimes E}_{e_k}[(e_j\phi_p)u],(e_j\phi_p)u)-(\sum_{k=1}^{2n}{e_k}\Phi_p (e_ke_j\phi_p)u, (e_j\phi_p)u).
\end{multline*}
Therefore, we get 
\begin{multline} \label{e:nabla-e_j12}
|(d \Phi_p\cdot \nabla^{L^p\otimes E}u,(e_j\phi_p)^2u)|\\ \leq 
 C_1p^{1/2}\|\nabla^{L^p\otimes E}[(e_j\phi_p)u]\| \|\psi_p u\|+C_2p^{3/2}\|\psi_p u\|^2.
\end{multline}
Plugging \eqref{e:nabla-e_j1b} and \eqref{e:nabla-e_j12} into \eqref{e:nabla-e_j1}, we get an estimate for the first term in the right hand side of \eqref{e:nabla-e_j}
\begin{multline}\label{e:nabla-e_j1f}
|\Re (H_p u, (e_j\phi_p)^2u)|\\ \leq C_1\|\nabla^{L^p\otimes E}(e_j\phi_p)u\| \|\psi_p u\|+C_2p\|\psi_p v_p\|_{L^2(U_\alpha)}\|\psi_p u\|.
\end{multline} 

For the second term in the right hand side of \eqref{e:nabla-e_j}, we write
\begin{multline*}
(\nabla_{\sum_k\nabla_{e_k}e_k}^{L^p\otimes E}u, (e_j\phi_p)^2u)=((e_j\phi_p)\nabla_{\sum_k\nabla_{e_k}e_k}^{L^p\otimes E}u, (e_j\phi_p)u)\\
=(\nabla_{\sum_k\nabla_{e_k}e_k}^{L^p\otimes E}[(e_j\phi_p)u], (e_j\phi_p)u)+((\sum_k\nabla_{e_k}e_k) e_j\phi_p)u, (e_j\phi_p)u),
\end{multline*}
that gives an estimate
\begin{multline}\label{e:nabla-e_j2}
|\Re (\nabla_{\sum_k\nabla_{e_k}e_k}^{L^p\otimes E}u, (e_j\phi_p)^2u)|\\
\leq C_1p^{1/2}\|\nabla^{L^p\otimes E}[(e_j\phi_p)u]\| \|\psi_p u\|+C_2p^{3/2}\|\psi_p u\|^2.
\end{multline}
 
Plugging \eqref{e:nabla-e_j1f} and \eqref{e:nabla-e_j2} into \eqref{e:nabla-e_j}, we get an estimate 
\[
\|\nabla^{L^p\otimes E}[(e_j\phi_p)u]\|^2 \leq  C_1p\|\nabla^{L^p\otimes E}[(e_j\phi_p)u]\|\|\psi_p u\|+C_2p^2\|\psi_p v_p\|_{L^2(U_\alpha)}\|\psi_p u\|.
\] 
Now we proceed as follows: 
\begin{align*}
\|\nabla^{L^p\otimes E}[(e_j\phi_p)u]\|^2 \leq & \frac{C_1}{2}(\epsilon^{-1}p^2\|\psi_p u\|^2+\epsilon\|\nabla^{L^p\otimes E}[(e_j\phi_p)u]\|^2)\\ & +C_2p^2\|\psi_p v_p\|_{L^2(U_\alpha)}\|\psi_p u\|.
\end{align*} 
Taking $\frac{C_1}{2}\epsilon<\frac 12$ and using the fact that $\|\psi_p u\|\leq \|\psi_p v_p\|_{L^2(U_\alpha)}$, we infer that 
\[
\|\nabla^{L^p\otimes E}[(e_j\phi_p)u]\|\leq Cp\|\psi_p v_p\|_{L^2(U_\alpha)}. 
\]
Now we use \eqref{e:dphi:est} and recall that $u=(\varphi_\alpha \circ \varkappa^{-1}_\alpha)v_p$. We get
\[
\|d\phi_p\cdot\nabla^{L^p\otimes E}[(\varphi_\alpha \circ \varkappa^{-1}_\alpha)v_p]\|\leq Cp\|\psi_p v_p\|_{L^2(U_\alpha)}.
\]
By \eqref{e:local3}, it follows that
\[
\|d\phi_p\cdot \nabla^{L^p\otimes E}v_p\|^2 \leq C^2p^2\sum_{\alpha=1}^{I} \|\psi_p v_p\|_{L^2(U_\alpha)}^2.
\]
By \eqref{e:Leb}, we have the estimate
\[
\sum_{\alpha=1}^{I} \|\psi_p v_p\|^2_{L^2(U_\alpha)}\leq K_0 \|\psi_p v_p\|^2,
\]
which completes the proof of Lemma~\ref{l:nabla-est}.

\end{document}